\definecolor{darkgreen}{rgb}{0.0, 0.7, 0.0}
\definecolor{purple}{rgb}{0.5, 0.0, 0.5}
\definecolor{red}{rgb}{0.8, 0.2, 0.0}
\newtheorem{thm}{Theorem}[section]
\newtheorem{bthm}{Theorem}
\newtheorem{bcor}{Corollary}
\newtheorem{lemma}[thm]{Lemma}
\newtheorem{cor}[thm]{Corollary}
\newtheorem{claim}[thm]{Claim}
\numberwithin{equation}{section}
\theoremstyle{definition}
\newtheorem{defi}[thm]{Definition}
\theoremstyle{remark}
\newtheorem{remark}[thm]{Remark}
\newcommand{\C}{\mathbb{C}}
\def \Im{{\rm Im}}
\def \P{\mathbb{P}}
\def \ZZ{\mathbb{Z}}
\def \F{\mathcal F}
\def\I{\mathcal I}
\def \L{\mathcal L}
\def \E{\mathcal E}
\def \G{\mathcal G}
\def \H{\mathcal H}
\def \U{\mathcal U}
\def\O{\mathcal O}
\def\M0{\mathcal M^0}
\DeclareMathOperator{\Ker}{{Ker}}
\newcommand{\rk}{\operatorname{rk}}
\title[A geometrical view of Ulrich vector bundles]{A geometrical view of Ulrich vector bundles}
\author[A.F. Lopez and J.C. Sierra]{Angelo Felice Lopez* and Jos\'e Carlos Sierra}
\address{\hskip -.43cm Angelo Felice Lopez, Dipartimento di Matematica e Fisica, Universit\`a di Roma
Tre, Largo San Leonardo Murialdo 1, 00146, Roma, Italy. e-mail {\tt lopez@mat.uniroma3.it}}
\address{\hskip -.43cm Jos\'e Carlos Sierra, Departamento de Matem\'aticas Fundamentales, Facultad de Ciencias, UNED,  C/ Juan del Rosal 10, 28040 Madrid, Spain. e-mail {\tt jcsierra@mat.uned.es}}
\thanks{* Research partially supported by  PRIN ``Advances in Moduli Theory and Birational Classification'' and GNSAGA-INdAM}
\thanks{{\it Mathematics Subject Classification} : Primary 14J60. Secondary 14J40, 14N05.}
\begin{document} 

\begin{abstract} 
We study geometrical properties of an Ulrich vector bundle $\E$ of rank $r$ on a smooth $n$-dimensional variety $X  \subseteq \P^N$. We characterize ampleness of $\E$ and of $\det \E$ in terms of the restriction to lines contained in $X$. We prove that all fibers of the map $\Phi_{\E}:X \to {\mathbb G}(r-1, \P H^0(\E))$ are linear spaces, as well as the projection on $X$ of all fibers of the map $\varphi_{\E} : \P(\E) \to \P H^0(\E)$. Then we get 
a number of consequences: a characterization of bigness of $\E$ and of $\det \E$ in terms of the maps $\Phi_{\E}$ and $\varphi_{\E}$; when $\det\E$ is big and $\E$ is not big there are infinitely many linear spaces in $X$ through any point of $X$; when $\det \E$ is not big, the fibers of $\Phi_{\E}$ and $\varphi_{\E}$ have the same dimension; a classification of Ulrich vector bundles whose determinant has numerical dimension at most $\frac{n}{2}$; a classification of Ulrich vector bundles with $\det \E$ of numerical dimension at most $k$ on a linear $\P^k$-bundle.
\end{abstract}

\maketitle

\section{Introduction}

Let $X$ be a smooth irreducible variety of dimension $n \ge 1$. Introducing a rank $r$ globally generated vector bundle $\E$ on $X$ gives, as is well known, two ways of investigating the geometry of $X$. One is via the map
$$\Phi_{\E} : X \to {\mathbb G}(r-1, \P H^0(\E))$$
and the other one via the map
$$\varphi_{\E} = \varphi_{\O_{\P(\E)}(1)} : \P(\E) \to \P H^0(\O_{\P(\E)}(1)) \cong \P H^0(\E).$$
The latter map together with the ones associated to multiples $\O_{\P(\E)}(m)$ measure the positivity of $\E$, for example $\E$ is ample if and only if $\varphi_{\O_{\P(\E)}(m)}$ is an embedding for $m \gg 0$ and $\E$ is big if and only if the image of 
$\varphi_{\E}$ has dimension $n+r-1$. On the other hand, considering the map
$$\lambda_{\E} : \Lambda^r H^0(\E) \to H^0(\det \E)$$
one gets a commutative diagram (see for example \cite[\S 3]{mu})
\begin{equation}
\label{muk}
\xymatrix{X \ar[d]^{\varphi_{|\Im \lambda_{\E}|}} \ar[r]^{\hskip -.9cm \Phi_{\E}} &  {\mathbb G}(r-1,\P H^0(\E)) \ar@{^{(}->}[d]^{P_{\E}} \\ \P \Im \lambda_{\E} \ar@{^{(}->}[r] & \P \Lambda^r H^0(\E)}
\end{equation}
where $P_{\E}$ is the Pl\"ucker embedding. Thus the positivity of $\det \E$ can be related to the map $\Phi_{\E}$.

There is a very interesting class of vector bundles that has recently attracted a lot of attention, namely that of Ulrich vector bundles. Recall that if $X  \subseteq \P^N$, a vector bundle $\E$ on $X$ is an Ulrich vector bundle if $H^i(\E(-p))=0$ for all $i \ge 0$ and $1 \le p \le n$. 

Ulrich vector bundles are very interesting for several reasons as they play a relevant role both in commutative algebra and in algebraic geometry (see for example \cite{es, b1, cmp} and references therein). Important reasons being that having an Ulrich vector bundle, conjecturally true in all cases, has several consequences, such has determinantal representation, Chow forms, Boij-S\"oderberg theory and so on.

Now an Ulrich vector bundle $\E$ is globally generated, hence one naturally wonders about the positivity properties of $\E$, of $\det \E$ and the behaviour of the above maps $\Phi_{\E}$ and $\varphi_{\E}$. 

It is the purpose of this paper to highlight several nice geometric features of these maps and their relation to the positivity of $\E$. 

The first main result is a characterization of Ulrich vector bundles that are ample, or that have ample 
determinant, in terms of the above maps and of lines contained in $X$, as follows. 

\begin{bthm}
\label{main1}

\hskip 3cm

Let $X  \subseteq \P^N$ be a smooth irreducible variety and let $\E$ be an Ulrich vector bundle on $X$. Then the following are equivalent:
\begin{itemize}
\item [(i)] $\E$ is very ample (that is $\varphi_{\E}$ is an embedding);
\item [(ii)] $\E$ is ample;
\item [(iii)] either $X$ does not contain lines or $\E_{|L}$ is ample on any line $L \subset X$.
\end{itemize}
Also the following are equivalent:
\begin{itemize}
\item [(iv)] $\Phi_{\E}$ is an embedding;
\item [(v)] $\det \E$ is very ample;
\item [(vi)] $\det \E$ is ample;
\item [(vii)] either $X$ does not contain lines or $\E_{|L}$ is not trivial on any line $L \subset X$.
\end{itemize}
\end{bthm}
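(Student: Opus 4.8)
The plan is to run the two cycles (i)$\Rightarrow$(ii)$\Rightarrow$(iii)$\Rightarrow$(i) and (iv)$\Rightarrow$(v)$\Rightarrow$(vi)$\Rightarrow$(vii)$\Rightarrow$(iv), isolating the two genuine implications from the formal ones. Indeed (i)$\Rightarrow$(ii) and (v)$\Rightarrow$(vi) are just ``very ample $\Rightarrow$ ample''; (ii)$\Rightarrow$(iii) and (vi)$\Rightarrow$(vii) hold because ampleness of $\E$ (resp. of $\det\E$) restricts to ampleness of $\E_{|L}$ (resp. of $\det(\E_{|L})$, forcing $\E_{|L}$ nontrivial) on every line, the ``no lines'' clause being vacuous; and (iv)$\Rightarrow$(v) reads off diagram \eqref{muk}: if $\Phi_{\E}$ is an embedding then, composing with the Pl\"ucker embedding $P_{\E}$, the morphism $\varphi_{|\Im\lambda_{\E}|}=P_{\E}\circ\Phi_{\E}$ is an embedding, so the subsystem $\Im\lambda_{\E}\subseteq H^0(\det\E)$ is very ample and hence so is $\det\E$. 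Thus everything reduces to (iii)$\Rightarrow$(i) and (vii)$\Rightarrow$(iv).

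For these I would first record the behaviour on a line. Since $\E$ is Ulrich it is $0$-regular, hence globally generated, so on a line $L\subset X$ one has $\E_{|L}\cong\bigoplus_{i=1}^r\O_{\P^1}(a_i)$ with all $a_i\ge 0$, and I would pin down the splitting type (in particular $a_i\in\{0,1\}$), where the Ulrich hypothesis enters on lines through the finite linear projection $\pi:X\to\P^n$ with $\pi_*\E\cong\O_{\P^n}^{\oplus rd}$, $d=\deg X$. This gives the dictionary: $\E_{|L}$ is ample iff every $a_i\ge 1$ iff $\E_{|L}$ has no trivial quotient, while $\det(\E_{|L})=\O_{\P^1}(\sum a_i)$ is nontrivial iff some $a_i\ge 1$ iff $\E_{|L}$ is nontrivial. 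Hence (iii) says ``no line carries a trivial quotient of $\E$'' and (vii) says ``no line has $\E_{|L}$ trivial''. I would also note the fibrewise picture: as $\E$ is globally generated, $\varphi_{\E}$ and $\Phi_{\E}$ are morphisms, and on each fibre $\varphi_{\E}$ restricts to the linear embedding $\P(\E_x)\hookrightarrow\P H^0(\E)$ induced by the surjection $H^0(\E)\twoheadrightarrow\E_x$, so these maps can only fail to be embeddings ``in the base direction''.

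The heart of the matter, and the step I expect to be the main obstacle, is the following: \emph{for an Ulrich bundle every degeneracy of $\varphi_{\E}$ (resp. of $\Phi_{\E}$) is supported on a line $L\subset X$ along which $\E_{|L}$ acquires a trivial quotient (resp. is trivial)}. The easy half is that a curve $C$ with $\O_{\P(\E)}(1)\cdot C=0$ yields, on the normalization of $B=\pi(C)$, a degree-$0$ globally generated quotient of $\E_{|B}$, necessarily the trivial bundle; the genuinely hard half is that $\deg B=1$. Here I would exploit the Ulrich structure: the counit $\pi^*\pi_*\E=\pi^*\O_{\P^n}^{\oplus rd}\twoheadrightarrow\E$ embeds $\P(\E)\hookrightarrow\P^n\times\P H^0(\E)$ with $\O_{\P(\E)}(1)=\mathrm{pr}_2^*\O(1)$, so a contracted $C$ lies in a fibre of $\mathrm{pr}_2$ and maps to a curve $\bar B\subset\P^n$ with the property that every section of $\E$ vanishing somewhere on $B$ lies in a fixed hyperplane of $H^0(\E)$; using the triviality of $\pi_*\E$ I would argue this forces $\bar B$, and hence $B=L$, to be a line. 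The collision of two points or the killing of a tangent would be reduced to this case by the same analysis (a secant line joining non-separated points carrying a trivial quotient of $\E$), or by degenerating the identification to a contracted curve.

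Granting the key step the two implications close. For (iii)$\Rightarrow$(i): if $X$ has no lines then $\varphi_{\E}$ contracts no curve, so it is finite and $\O_{\P(\E)}(1)$, as the pullback of $\O(1)$ by a finite morphism, is ample; the remaining separation of points and tangents fails only along lines with $\E_{|L}$ non-ample, which are absent, so $\varphi_{\E}$ is an embedding. If $X$ has lines, (iii) forbids trivial quotients on them, and the same argument applies verbatim. Symmetrically, for (vii)$\Rightarrow$(iv) the morphism $\Phi_{\E}$ contracts a line $L$ exactly when $\E_{|L}$ is trivial, and collapses points or tangents only through such lines; condition (vii) rules these out, so $\Phi_{\E}$ is an embedding, which in turn feeds back through \eqref{muk} to complete the second cycle.
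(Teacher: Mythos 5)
Your skeleton coincides with the paper's: the implications (i)$\Rightarrow$(ii), (ii)$\Rightarrow$(iii), (iv)$\Rightarrow$(v) via diagram \eqref{muk}, (v)$\Rightarrow$(vi) and (vi)$\Rightarrow$(vii) are exactly the formal steps the paper takes, and you correctly isolate (iii)$\Rightarrow$(i) and (vii)$\Rightarrow$(iv), together with the key claim that any failure of separation for $\varphi_{\E}$ or $\Phi_{\E}$ must be supported on a line $L\subset X$ on which $\E_{|L}$ has a trivial summand (resp. is trivial). The gap is that you never prove this key claim. You yourself flag that ``the genuinely hard half is that $\deg B=1$'', and at that exact point the proposal only says you ``would argue'' that the triviality of $\pi_*\E$ forces the image curve to be a line; no mechanism is given, and this is precisely where the Ulrich hypothesis must do real work. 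The paper's mechanism is concrete and different from your sketch: Ulrichness restricts to general hyperplane sections (Lemma \ref{rest}), which gives that $H^0(\E)\to H^0(\E_{|Z})$ is surjective for every length-$2$ subscheme $Z\subset X$ whose spanned line is \emph{not} contained in $X$ (Lemma \ref{span}), and that $H^0(\E)\to H^0(\E_{|L})$ is surjective for every line $L\subset X$, with the dichotomy that $(\Phi_{\E})_{|L}$ is either an isomorphism onto its image or constant with $\E_{|L}$ trivial (Lemma \ref{rette}). Under (iii) or (vii) one then splices these surjectivities with the very ampleness of $\E_{|L}$ (resp. of $\det(\E_{|L})$) to separate all length-$2$ subschemes, including the non-reduced ones. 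Nothing in your proposal substitutes for these two lemmas.

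There is also a logical flaw in your reduction of point/tangent separation to contracted curves: if $\varphi_{\E}$ identifies two points, no contracted curve is produced, and your parenthetical fix --- ``a secant line joining non-separated points carrying a trivial quotient of $\E$'' --- presupposes that this secant line lies in $X$, which is exactly the contrapositive of Lemma \ref{span}, i.e.\ the statement to be proved; as written the reduction is circular. Two smaller inaccuracies: the asserted splitting type $a_i\in\{0,1\}$ for $\E_{|L}$ is false --- in Remark \ref{delp} the Ulrich line bundle on the degree-$7$ Del Pezzo surface restricts to $\O_{\P^1}(2)$ on the line $E_2$ --- though you never use it, since the ample/nontrivial dictionary only needs $a_i\ge 0$; and the counit $\pi^*\pi_*\E\twoheadrightarrow\E$ gives a closed embedding of $\P(\E)$ into $X\times\P H^0(\E)$ followed by a finite map to $\P^n\times\P H^0(\E)$, not an embedding into the latter. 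In short, the outer logic of your proof matches the paper, but the two separation lemmas that constitute its mathematical core are missing, and the route you hint at (via $\pi_*\E\cong\O_{\P^n}^{\oplus rd}$) is not developed far enough to tell whether it could replace them.
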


As far as we know the best result previously known, is that if $X  \subseteq \P^N$ is not covered by lines, then any Ulrich vector bundle on $X$ is big and that if $X$ does not contain any line, then $\det \E$ is ample \cite[Thm.~1]{lo}. Thus Theorem \ref{main1} improves and clarifies \cite[Thm.~1]{lo}. Note that one cannot hope to detect bigness simply by restricting to lines (cf. Theorem \ref{main1} and Corollary \ref{cor1} below). 
For example the spinor bundle on the quadric $3$-fold is Ulrich not big, while its restriction to the hyperplane section is Ulrich and  big. But they both restrict to lines as $\O_{\P^1} \oplus \O_{\P^1}(1)$. Moreover note that varieties not containing lines are also characterized by the ampleness of a vector bundle, namely of $\Omega_X(2)$ \cite[Prop.~4.2]{br}. 

Another very nice property of Ulrich vector bundles is that the fibers of $\Phi_{\E}$ are linear spaces and the fibers of $\varphi_{\E}$ project on $X$ to linear spaces, as we show in the following result. 

We set $\nu(\E) := \nu(\O_{\P(\E)}(1))$ for the numerical dimension. 

\begin{bthm}
\label{main2} 

\hskip 3cm

Let $X  \subseteq \P^N$ be a smooth irreducible variety of dimension $n \ge 1$ and degree $d$. Let $\E$ be a rank $r$ Ulrich vector bundle on $X$. Then any 
scheme-theoretic 
fiber of $\Phi_{\E} : X \to {\mathbb G}(r-1,rd-1)$ is a linear subspace in $\P^N$ of dimension at least $n - \nu(\det \E)$. 

Moreover any 
scheme-theoretic 
fiber of $\varphi_{\E}: \P(\E) \to \P^{rd-1}$ projects isomorphically onto a linear subspace in $\P^N$ of dimension at least $n+r-1-\nu(\E)$ contained in $X$.
\end{bthm}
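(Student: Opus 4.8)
The plan is to prove both assertions by induction on $n=\dim X$, getting the dimension estimates for free from semiampleness and reducing the linearity to a hyperplane-section/degree argument. The dimension bounds are soft. Since $\E$ is Ulrich it is globally generated, so $\O_{\P(\E)}(1)$ is semiample and $\nu(\E)=\nu(\O_{\P(\E)}(1))=\dim\overline{\varphi_\E(\P(\E))}$; as $\dim\P(\E)=n+r-1$, every fibre of $\varphi_\E$ has dimension at least $n+r-1-\nu(\E)$. For $\Phi_\E$, note in diagram \eqref{muk} that $P_\E$ is an embedding, so the fibres of $\Phi_\E$ are exactly those of $\varphi_{|\Im\lambda_\E|}$, and since $\Im\lambda_\E\subseteq H^0(\det\E)$ this map is $\varphi_{|\det\E|}$ followed by a linear projection; hence $\dim\overline{\Phi_\E(X)}\le\dim\overline{\varphi_{|\det\E|}(X)}=\nu(\det\E)$ (again $\det\E$ is semiample), and every fibre of $\Phi_\E$ has dimension at least $n-\nu(\det\E)$. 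It remains to prove the fibres are linear.

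The positive cases are immediate from Theorem \ref{main1}: if $\det\E$ is ample then $\Phi_\E$ is an embedding and every fibre is a reduced point, proving the first assertion, and if $\E$ is ample then $\varphi_\E$ is an embedding and its fibres are points, proving the second. So I may assume $\det\E$ (resp. $\E$) is not ample. The engine is the elementary fact that a closed subvariety $V$ of dimension $\ge 1$ in some $\P^M$ whose general hyperplane section is a linear space satisfies $\deg V=\deg(V\cap H)=1$, hence is itself a linear subspace. To run the induction, fix a general hyperplane $H$; by Bertini $X':=X\cap H$ is smooth and $\E':=\E_{|X'}$ is Ulrich on $X'\subseteq\P^{N-1}$. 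Because $\E$ is Ulrich, $H^0(\E(-1))=H^1(\E(-1))=0$, so restriction gives $H^0(\E)\xrightarrow{\sim}H^0(\E')$; under this identification $\Phi_{\E'}=\Phi_\E|_{X'}$, so any fibre $F$ of $\Phi_\E$ meets $X'$ in a fibre of $\Phi_{\E'}$, that is, $F\cap H$ is a fibre of $\Phi_{\E'}$. By the inductive hypothesis $F\cap H$ is a linear subspace of $H$, and since it is the general hyperplane section of $F$ the degree argument forces $F$ to be a linear subspace of $\P^N$. The base case $n=1$ is direct: on a curve $\det\E$ has degree $r(d+g-1)$, positive unless $X$ is a line, so either $\det\E$ is ample and $\Phi_\E$ is an embedding (fibres are points), or $X=L$ is a line, $\E_{|L}=\O_L^r$, and the only fibre is $L$ itself.

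For the second assertion I combine this with the fact that $\pi\colon\P(\E)\to X$ is injective on each fibre of $\varphi_\E$: a point of $\varphi_\E^{-1}(q)$ over $x$ is the unique preimage of $q$ under the linear isomorphism $\P(\E_x)\xrightarrow{\sim}\Phi_\E(x)$, which exists precisely when $q\in\Phi_\E(x)$. Thus $\pi$ maps $\varphi_\E^{-1}(q)$ bijectively onto $G_q:=\{x\in X:q\in\Phi_\E(x)\}\subseteq X$. Using $\varphi_{\E'}=\varphi_\E|_{\P(\E')}$ with $\P(\E')=\pi^{-1}(X')$, one has $\varphi_\E^{-1}(q)\cap\P(\E')=\varphi_{\E'}^{-1}(q)$, so $G_q\cap H$ is the analogous locus for $\E'$; the same induction and degree argument show $G_q$ is a linear subspace of $\P^N$ contained in $X$. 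Finally the bijective morphism $\varphi_\E^{-1}(q)\to G_q$ onto the smooth variety $G_q$ is an isomorphism by Zariski's main theorem, which is the stated isomorphic projection.

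The step I expect to be the main obstacle is upgrading ``the general hyperplane section of a fibre is a single irreducible linear space'' to ``the fibre itself is one linear subspace'': a priori a fibre could be reducible or disconnected, and I must ensure connectedness is preserved so that $\deg=1$ yields an irreducible linear space rather than a union of such. I would control this via connectedness of general hyperplane sections together with the base-case input of Theorem \ref{main1} (where $\det\E$ or $\E$ is ample the fibres are genuinely points), propagating irreducibility up the induction; the bookkeeping $\nu(\det\E')\le\nu(\det\E)$ and $\nu(\E')\le\nu(\E)$ under restriction is routine but must be tracked to keep the dimension estimates consistent, as is the normality needed for the Zariski's main theorem step in the second assertion.
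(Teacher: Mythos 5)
Your dimension estimates are fine, and so is the reduction of the second assertion to the first via the injectivity of $\pi$ on fibers of $\varphi_{\E}$ (Remark \ref{inj}). The genuine gap is exactly the obstacle you flag at the end, and your proposed remedy cannot close it: an induction on \emph{general} hyperplane sections is structurally blind to the zero-dimensional components of a fiber. By the fiber-dimension theorem and \eqref{pc}, every irreducible component of every fiber of $\Phi_{\E}$ has dimension at least $n-\dim\Phi_{\E}(X)=n-\nu(\det\E)$, so isolated points in fibers can occur only when $\det\E$ is big; your positive case via Theorem \ref{main1} covers only $\det\E$ \emph{ample}. In the remaining case ($\det\E$ big but not ample, which really occurs, e.g.\ Remark \ref{delp}, where $\Phi_{\E}$ is birational but contracts a line), a fiber $F$ could a priori be (a) zero-dimensional consisting of several points, or (b) a positive-dimensional linear space together with finitely many isolated points. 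In either situation a general hyperplane $H$ misses the isolated points, so $F\cap H$ is a bona fide linear space (or empty), your induction and degree argument run without detecting anything, and the conclusion ``$F$ is a linear subspace'' is simply not established. Connectedness of general hyperplane sections is of no use here, because the hyperplane section of the offending set is empty. The same gap affects your treatment of $G_q$ in the second assertion when $\E$ is big but not ample (e.g.\ the spinor bundle on the quadric surface, which is big but restricts to lines with a trivial summand, hence is not ample).

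This is not patchable by Bertini-type arguments, because ruling out (a) and (b) is precisely the content of the paper's separation lemmas, which prove a \emph{pairwise} statement: for any two distinct points $x\ne x'$ in the same fiber, the line $\langle x,x'\rangle$ must be contained in $X$ (Lemma \ref{span}: otherwise $P_x\cap P_{x'}=\emptyset$, so $\Phi_{\E}(x)\ne\Phi_{\E}(x')$), and on a line $L\subseteq X$ the map $(\Phi_{\E})_{|L}$ is either an isomorphism onto its image or constant (Lemma \ref{rette}); hence $\langle x,x'\rangle$ lies in the fiber (Lemma \ref{fibre}). This kills isolated points and disconnectedness in one stroke. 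The reason the paper may use hyperplane sections \emph{through the two chosen points} while you cannot is that its Lemma \ref{rest} requires only that the linear sections have the expected dimension, not that they be smooth, whereas your induction invokes the theorem itself on $X\cap H$ and therefore needs $X\cap H$ smooth, forcing $H$ to be general and thus to miss the very points you need to separate. Any repair of your scheme would require proving the restriction/surjectivity statements of Lemmas \ref{rest} and \ref{span} for these special, possibly singular, sections --- at which point you have reproduced the paper's argument. (A further point for the second assertion: even after knowing $L=\langle\pi(z),\pi(z')\rangle\subseteq X$ and that separation fails along $L$, one only gets that $\E_{|L}$ has \emph{some} trivial summand; the paper then needs the scroll/cone argument in its proof of Theorem \ref{main2} to produce, for each $x''\in L$, a point $z''\in\pi^{-1}(x'')$ in the same $\varphi_{\E}$-fiber, so the projected fiber really contains $L$.)
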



It is clear that, unless the image 
has dimension $1$, the fibers of $\Phi_{\E}$ and $\varphi_{\E}$ can actually have different dimensions, see for example Remark \ref{delp}.

The above theorems and the methods used to prove them lead to several consequences in some interesting cases, 
listed below. See also Remark \ref{kgg} for global generation of adjoint bundles. 


Notice that for any globally generated vector bundle $\E$ one has that if $\varphi_{\E}$ (respectively $\Phi_{\E}$) is birational onto its image, then $\E$ (respectively $\det \E$) is big. It is a nice fact that for an Ulrich vector bundle the converse holds.

\begin{bcor}
\label{cor1}
\hskip 3cm

Let $X  \subseteq \P^N$ be a smooth irreducible variety and let $\E$ be an Ulrich vector bundle on $X$. Then the following are equivalent:
\begin{itemize}
\item [(i)] $\varphi_{\E}$ is birational onto its image;
\item [(ii)] $\E$ is big.
\end{itemize}
Similarly, the following are equivalent:
\begin{itemize}
\item [(iii)] $\Phi_{\E}$ is birational onto its image;
\item [(iv)] $\det \E$ is big.
\end{itemize}
\end{bcor}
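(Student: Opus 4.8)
The plan is to prove the two nontrivial implications (ii)$\Rightarrow$(i) and (iv)$\Rightarrow$(iii); the reverse implications (i)$\Rightarrow$(ii) and (iii)$\Rightarrow$(iv) are the elementary observation recorded just before the statement (a morphism birational onto its image has image of maximal dimension, so $\E$, respectively $\det\E$ by the Pl\"ucker factorization \eqref{muk}, is big). The unifying idea is that Theorem \ref{main2} makes the relevant fibers \emph{linear spaces}: they are irreducible and reduced, so a generically finite map all of whose fibers are of this type is automatically birational onto its image. Hence in each case it is enough to show that bigness forces generic finiteness, that is, that the image has the dimension of the source.

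For (ii)$\Rightarrow$(i) I would argue directly. As recalled in the introduction, $\E$ is big exactly when $\dim\varphi_\E(\P(\E))=n+r-1=\dim\P(\E)$, so $\varphi_\E$ is generically finite. By the second part of Theorem \ref{main2} the general fiber of $\varphi_\E$ projects isomorphically onto a linear subspace of $\P^N$ of dimension at least $n+r-1-\nu(\E)=0$; being $0$-dimensional it projects onto a single point, and since the projection is an isomorphism the fiber itself is a single reduced point. Therefore $\varphi_\E$ is birational onto its image.

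For (iv)$\Rightarrow$(iii) the key preliminary is the identity $\dim\Phi_\E(X)=\nu(\det\E)$. This holds because $\E$ is globally generated, hence so is $\det\E$, and in fact $\det\E$ is generated by the subsystem $\Im\lambda_\E\subseteq H^0(\det\E)$; through the factorization \eqref{muk}, $\Phi_\E$ is, up to the Pl\"ucker embedding, the morphism defined by this base-point-free subsystem, whose image has dimension equal to the numerical dimension of $\det\E$. Granting this, if $\det\E$ is big then $\nu(\det\E)=n$, so $\dim\Phi_\E(X)=n$ and $\Phi_\E$ is generically finite; by the first part of Theorem \ref{main2} its general fiber is a linear space of dimension at least $n-\nu(\det\E)=0$, hence a single reduced point, and $\Phi_\E$ is birational onto its image.

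I expect the main obstacle to be the correct translation of the hypotheses into a dimension count, rather than any delicate geometry. For $\E$ this is handed to us by the characterization of bigness in the introduction, but for $\det\E$ one must be careful that $\Phi_\E$ only sees the subsystem $\Im\lambda_\E$ and not the full system $|\det\E|$; the point to verify is precisely that this subsystem already generates $\det\E$, so that the dimension of $\Phi_\E(X)$ computes $\nu(\det\E)$. Once generic finiteness is established, the linearity, and hence the irreducibility and reducedness, of the fibers supplied by Theorem \ref{main2} upgrades it to birationality with no further work.
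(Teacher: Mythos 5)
Your proposal is correct and follows essentially the same route as the paper: the easy implications come from diagram \eqref{muk}, and the hard ones from the fact that bigness forces the numerical dimension to be maximal, so the general fiber of $\varphi_{\E}$ (respectively $\Phi_{\E}$) is $0$-dimensional, hence a single reduced point by the linearity statement of Theorem \ref{main2}. Your auxiliary claim $\dim\Phi_{\E}(X)=\nu(\det\E)$ is exactly the content of the paper's identity \eqref{pc}, which it had already recorded as a consequence of \eqref{muk}, so no genuinely new ingredient is involved.
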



Next, when $\det \E$ is big and $\E$ is not big, we show that there are actually infinitely many linear subspaces of dimension $n+r-1-\nu(\E)$ contained in $X$ and passing through any point. 

We let $\phi(\E)$ be the dimension of the general fiber of $\Phi_{\E}$.

\begin{bcor}
\label{cor2}

\hskip 3cm

Let $X  \subseteq \P^N$ be a smooth irreducible variety of dimension $n \ge 1$ and let $\E$ be a rank $r$ Ulrich vector bundle on $X$ such that $c_1(\E)^n>0$ (or, equivalently, $\phi(\E) = 0$). If $\E$ is not big then through any point of $X$ there are infinitely many $(n+r-1-\nu(\E))$-dimensional linear subspaces of $\P^N$ contained in $X$.
\end{bcor}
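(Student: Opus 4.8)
The plan is to exploit the two structural inputs provided by the earlier theorems: by Theorem~\ref{main2}, every fiber of $\Phi_{\E}$ is a linear subspace of $\P^N$ contained in $X$, and when $\E$ is big but $\det\E$ is big as well, Corollary~\ref{cor1} tells us these maps are generically finite. The hypothesis $c_1(\E)^n>0$ says exactly that $\det\E=\det$ of an Ulrich bundle has top self-intersection positive, which by the standard fact $\nu(\det\E)=n \iff c_1(\E)^n>0$ forces $\det\E$ to be big and $\Phi_{\E}$ to be generically finite, i.e.\ $\phi(\E)=0$; this is the parenthetical equivalence in the statement. So the starting point is that $\Phi_{\E}$ is generically finite onto its image while $\varphi_{\E}$ is \emph{not} birational onto its image, since $\E$ is not big.

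First I would fix an arbitrary point $x\in X$ and consider the fiber of $\varphi_{\E}:\P(\E)\to\P^{rd-1}$ through a point lying over $x$. By Theorem~\ref{main2}, the projection to $X$ of each fiber of $\varphi_{\E}$ is a linear subspace of dimension at least $n+r-1-\nu(\E)$ contained in $X$, and since $\E$ is not big we have $\nu(\E)<n+r-1$, so these linear spaces have positive dimension. The key point is to produce \emph{infinitely many} such spaces through the single point $x$. To do this I would look at how the fibers of $\varphi_{\E}$ sitting over $x$ vary: the fiber $\P(\E_x)\cong\P^{r-1}$ of $\P(\E)\to X$ maps under $\varphi_{\E}$ into $\P^{rd-1}$, and because $\E$ is not big, a general point of $\P(\E_x)$ lies on a positive-dimensional fiber of $\varphi_{\E}$ whose $X$-projection is a linear space through $x$. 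As the point of $\P(\E_x)$ varies — an $(r-1)$-dimensional family — one sweeps out a family of these linear spaces. The main obstacle is to show this family is genuinely infinite rather than collapsing to finitely many linear spaces, i.e.\ that distinct points of $\P(\E_x)$ can give rise to distinct linear subspaces; here one must rule out that all of $\P(\E_x)$ maps into a single fiber of $\varphi_{\E}$, which would force the image of $\varphi_{\E}$ to have too small dimension and contradict a dimension count tied to $\nu(\E)$.

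Concretely, I would argue by the dimension of the image. The image $Y:=\varphi_{\E}(\P(\E))$ has dimension $\nu(\E)$. Consider the restriction $\varphi_{\E}|_{\P(\E_x)}:\P^{r-1}\to Y$; if it were constant for every $x$, then $\varphi_{\E}$ would factor through $X$ and the image would have dimension at most $n<\nu(\E)$ once we know $\nu(\E)>n$ (which holds because $\det\E$ big forces $\nu(\E)\ge\nu(\det\E)=n$, with strict inequality coming from $\det\E$ big but $\E$ not big — this comparison of numerical dimensions is the delicate technical step). Hence the restriction to a general $\P(\E_x)$ is nonconstant, and its fibers are positive-dimensional linear subspaces of $\P(\E_x)$ whose images in $X$ are the desired linear spaces. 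Varying over the positive-dimensional family of fibers of $\varphi_{\E}|_{\P(\E_x)}$ yields infinitely many distinct linear spaces of the stated dimension, each passing through $x$.

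The step I expect to be the genuine obstacle is the comparison $\nu(\E)>n$ and the verification that the $X$-projections of distinct $\varphi_{\E}$-fibers through $x$ are distinct linear spaces (as opposed to coinciding). For the latter I would use that $\Phi_{\E}$ is generically finite: if two fibers of $\varphi_{\E}$ projected to the \emph{same} linear space $\Lambda\subset X$ through $x$, then $\E|_\Lambda$ would carry two distinct quotient line bundles trivializing along $\Lambda$, and I would trace this back through the diagram~\eqref{muk} relating $\varphi_{\E}$ and $\Phi_{\E}$ to derive a contradiction with the generic finiteness of $\Phi_{\E}$. The linearity of all the relevant spaces, already guaranteed by Theorem~\ref{main2}, is what makes these fiber comparisons tractable rather than requiring any further positivity input.
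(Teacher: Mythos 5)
Your approach has genuine gaps, starting with a misreading of how $\varphi_{\E}$ behaves on the fibers of $\pi:\P(\E)\to X$. By Remark \ref{inj}, the restriction $\varphi_{\E}|_{\P(\E_x)}$ is an isomorphism onto the linear space $P_x$; it is never constant, and its fibers are single points. So your central mechanism --- ``the restriction to a general $\P(\E_x)$ is nonconstant, and its fibers are positive-dimensional linear subspaces of $\P(\E_x)$ whose images in $X$ are the desired linear spaces'' --- does not exist: the positive-dimensional objects are the fibers of the full map $\varphi_{\E}$, and each of those meets $\P(\E_x)$ in at most one point (again by Remark \ref{inj}). Moreover, the inequality $\nu(\E)>n$ that you flag as the delicate technical step is simply false: for the spinor bundle on the quadric threefold (the paper's own example of a non-big Ulrich bundle with big determinant) every fiber of $\varphi_{\E}$ projects to a line, and since a smooth quadric threefold contains no planes one gets $\nu(\E)=3=n$. (The scenario this inequality was supposed to exclude --- constancy of $\varphi_{\E}|_{\P(\E_x)}$ --- never occurs anyway, so nothing would be gained even if it were true.)

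Even on the charitable reading --- consider, for each $u\in\P(\E_x)$, the fiber $\varphi_{\E}^{-1}(\varphi_{\E}(u))$, which has dimension at least $n+r-1-\nu(\E)\ge 1$ and by Theorem \ref{main2} projects isomorphically onto a linear space $\Lambda_u\subseteq X$ through $x$ --- your concluding step fails: distinct fibers meeting $\P(\E_x)$ need not project to distinct linear spaces, and this cannot be ruled out by the generic finiteness of $\Phi_{\E}$. Indeed, if $\E|_{\Lambda}\cong\O_{\Lambda}^{\oplus k}\oplus\G$ with $k\ge 2$ and $\G$ without trivial summands, the trivial quotients of $\E|_{\Lambda}$ give a whole $\P^{k-1}$ of fibers of $\varphi_{\E}$ all projecting onto the same $\Lambda$, perfectly compatibly with $\phi(\E)=0$. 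What one can salvage is weaker and requires an idea not present in your proposal: the points of $\P(\E_x)$ lying on fibers that project to a fixed $\Lambda$ form a linear subspace $\P^{k-1}\subseteq\P(\E_x)$ (trivial quotients of a globally generated bundle on a projective base split off as trivial summands), which is proper unless $\E|_{\Lambda}$ is trivial; since a finite union of proper linear subspaces cannot cover $\P^{r-1}$, finitely many $\Lambda$'s would force some $\E|_{\Lambda}$ to be trivial, hence a positive-dimensional $\Lambda$ inside a fiber of $\Phi_{\E}$, contradicting $\phi(\E)=0$ --- but this contradiction is available only at a \emph{general} $x$, because special fibers of $\Phi_{\E}$ can be positive-dimensional even when $c_1(\E)^n>0$. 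That exposes the last gap: you fix an arbitrary point from the start, whereas any use of $\phi(\E)=0$ works only generically and must be followed by a closedness argument to reach every point, as the paper does. For comparison, the paper's actual proof runs differently: it sets $Z=\{z\in X: P_z\cap P_x\ne\emptyset\}$ and proves $\dim Z\ge n+r-\nu(\E)$ via the incidence correspondence in $P_x\times X$, the hypothesis $\phi(\E)=0$ supplying the crucial extra $+1$ (for general $z$ one cannot have $P_z=P_x$); since $Z$ is a union of linear spaces of dimension at least $n+r-1-\nu(\E)$ through $x$, dimension reasons force infinitely many of them, and closedness then gives the statement at every point.
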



On the other hand, we can draw several consequences when $\det \E$ is not big or has lower numerical dimension.

First we have a result linking the fibers of $\Phi_{\E}$ and $\varphi_{\E}$ when $\det \E$ is not big (that is when $c_1(\E)^n=0$).

\begin{bcor} 
\label{cor3}

\hskip 3cm

Let $X  \subseteq \P^N$ be a smooth irreducible variety of dimension $n \ge 1$ and let $\E$ be a rank $r$ Ulrich vector bundle on $X$ such that $c_1(\E)^n=0$ (or, equivalently, $\phi(\E) \ge 1$). Then 
$$\nu(\E)+\phi(\E)=n+r-1$$ 
that is the general fibers of $\Phi_{\E}$ and $\varphi_{\E}$ have the same dimension. Moreover $X\subseteq \P^N$ is covered by a family of linear $\P^{\phi(\E)}$'s.
\end{bcor}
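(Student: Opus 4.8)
The plan is to reduce the whole statement to the single assertion that the general fibre of $\varphi_{\E}$ maps, under the projection $\pi:\P(\E)\to X$, isomorphically onto a general fibre of $\Phi_{\E}$. The parenthetical equivalence comes first: since $\E$ is Ulrich it is globally generated, so $\det\E$ is nef and $c_1(\E)^n=(\det\E)^n$ vanishes exactly when $\det\E$ is not big, which by Corollary \ref{cor1} happens exactly when $\Phi_{\E}$ is not birational onto its image, i.e. when $\phi(\E)\ge 1$. Now $\dim\P(\E)=n+r-1$, and since $\O_{\P(\E)}(1)$ is globally generated (hence semiample) the numerical dimension $\nu(\E)$ equals $\dim W$, where $W:=\varphi_{\E}(\P(\E))$; thus the general fibre of $\varphi_{\E}$ has dimension $n+r-1-\nu(\E)$, while the general fibre of $\Phi_{\E}$ has dimension $\phi(\E)$. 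Hence the identity $\nu(\E)+\phi(\E)=n+r-1$ is \emph{literally} the equality of these two fibre dimensions, and this is what I would aim to prove.

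For the easy inequality I would use that $W=\bigcup_{x\in X}\Phi_{\E}(x)$ is the union of the $(r-1)$-planes $\Phi_{\E}(x)$, parametrised by $V:=\Phi_{\E}(X)$, a variety of dimension $n-\phi(\E)$. Sweeping out these planes gives $\nu(\E)=\dim W\le \dim V+(r-1)=n+r-1-\phi(\E)$, that is $\nu(\E)+\phi(\E)\le n+r-1$ (in particular $\dim W<n+r-1$, so $\E$ is not big and the general $L_w$ below is positive-dimensional).

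For the reverse inequality I fix a general $w\in W$ and set $F_w:=\varphi_{\E}^{-1}(w)$; by Theorem \ref{main2}, $\pi$ maps $F_w$ isomorphically onto a linear subspace $L_w\subseteq X$. The key point is to show that $L_w$ is a fibre of $\Phi_{\E}$. First, let $G=\Phi_{\E}^{-1}([P])$ be a general (hence $\phi(\E)$-dimensional) fibre meeting $L_w$. On $\pi^{-1}(G)=\P(\E_{|G})$ the map $\varphi_{\E}$ sends $\P(\E_{|G})$ onto the single plane $\Phi_{\E}(G)=P$, so all its fibres have dimension at least $\dim\P(\E_{|G})-(r-1)=\phi(\E)$; intersecting with $F_w$ and projecting by $\pi$ gives $\dim(L_w\cap G)\ge \phi(\E)=\dim G$, whence $G\subseteq L_w$. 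Second, since $L_w$ is a linear space we have $\det\E_{|L_w}\cong\O_{L_w}(e)$ for some $e\ge 0$; as $G\subseteq L_w$ is contracted by $\Phi_{\E}$ and, being positive-dimensional and linear, contains a line $\ell$, the relation $\det\E\cdot\ell=\Phi_{\E}^{*}\O_{\mathbb G}(1)\cdot\ell=0$ coming from diagram \eqref{muk} forces $e=0$. Thus $\det\E_{|L_w}$ is trivial, $\Phi_{\E}$ is constant on $L_w$, and $L_w$ lies in a single fibre of $\Phi_{\E}$; since distinct fibres are disjoint and $G\subseteq L_w$, that fibre is $G$ and $L_w=G$. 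Therefore $n+r-1-\nu(\E)=\dim L_w=\phi(\E)$, which yields the equality.

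The main obstacle is exactly this last step: upgrading the crude bound $\nu(\E)+\phi(\E)\le n+r-1$ to an equality amounts to controlling how many of the planes $\Phi_{\E}(x)$ pass through a general point of $W$, and I expect the decisive input to be the combination of Theorem \ref{main2} — which guarantees that $L_w$ is a \emph{linear} space, so that $\det\E_{|L_w}$ is forced to be a single power $\O_{L_w}(e)$ — with the fact that $\Phi_{\E}$ contracts the lines contained in its fibres. Once the equality holds, the final assertion is immediate: by Theorem \ref{main2} the general fibre of $\Phi_{\E}$ is a linear $\P^{\phi(\E)}$, and since the fibres of a morphism cover its source, these linear spaces cover $X$, giving the required family.
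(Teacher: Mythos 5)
Your proposal is correct, and although it shares the paper's overall strategy --- proving the equality $\nu(\E)+\phi(\E)=n+r-1$ by identifying a general fibre of $\Phi_{\E}$ with the $\pi$-image of a general fibre of $\varphi_{\E}$ --- the key identification is carried out by a genuinely different mechanism. The paper fixes a general fibre $F$ of $\Phi_{\E}$, takes $y$ in the corresponding plane $P$, and compares $F$ with $W_y=\pi(\varphi_{\E}^{-1}(y))$: the inclusion $F\subseteq W_y$ is immediate from the choice of $y$, and the hard inclusion $W_y\subseteq F$ is proved by contradiction using Lemma \ref{span} (a point $x\in W_y\setminus F$ would be joined to every point of $F$ by lines inside $X$, producing a cone $\langle x,F\rangle\cong\P^{\phi(\E)+1}\subset X$ on which $\Phi_{\E}$ contracts a hyperplane, hence is constant). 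You instead start from the $\varphi_{\E}$-side, use the ``Moreover'' part of Theorem \ref{main2} to know that $L_w=\pi(\varphi_{\E}^{-1}(w))$ is linear, obtain $G\subseteq L_w$ by a fibre-dimension count for $\P(\E_{|G})\to P$, and then force $L_w\subseteq G$ by observing that a line $\ell\subseteq G$ is $\Phi_{\E}$-contracted, so $\det\E\cdot\ell=0$ by diagram \eqref{muk}, and since $\Pic(L_w)\cong\Z$ this makes $\det\E_{|L_w}$ trivial and $\Phi_{\E}$ constant on $L_w$. Both proofs ultimately exploit the same fact --- $\det\E$ is the pullback of the Pl\"ucker polarization, so a morphism from a projective space whose pullback has degree zero on one line is constant --- but yours trades Lemma \ref{span} and the cone construction for the linearity of projected $\varphi_{\E}$-fibres plus a Picard-group argument, so it never touches the separation lemmas directly. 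Two small points should be patched. First, the existence of a $\phi(\E)$-dimensional fibre $G$ of $\Phi_{\E}$ meeting $L_w$ is not automatic: you need that for general $w$ the fibre $\varphi_{\E}^{-1}(w)$ meets $\pi^{-1}(X_0)$, where $X_0$ is the dense open set over which $\Phi_{\E}$ has minimal fibre dimension; this holds because $\varphi_{\E}(\pi^{-1}(X_0))$ is constructible and dense in the image of $\varphi_{\E}$, but it must be said. Second, the closing sentence is slightly off: the general fibres of $\Phi_{\E}$ do not by themselves cover $X$; rather, by Theorem \ref{main2} \emph{every} fibre is a linear space of dimension at least $\phi(\E)$, hence every point of $X$ lies on a linear $\P^{\phi(\E)}$ contained in $X$, which is what the covering assertion means.
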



We also get a classification of Ulrich vector bundles whose determinant has numerical dimension at most $\frac{n}{2}$.

\begin{bcor}
\label{cor4}

\hskip 3cm

Let $X  \subseteq \P^N$ be a smooth irreducible variety of dimension $n \ge 1$. Let $\E$ be a rank $r$ vector bundle on $X$. Then $\E$ is Ulrich with $c_1(\E)^{\lfloor \frac{n}{2} \rfloor +1} =0$ if and only if $(X,\O_X(1), \E)$ is one of the following:
\begin{itemize}
\item [(i)] $(\P^n, \O_{\P^n}(1), \O_{\P^n}^{\oplus r})$;
\item [(ii)] $(\P(\F), \O_{\P(\F)}(1), p^*(\G(\det \F)))$, where $\F$ is a very ample rank $n-b+1$ vector bundle on a smooth irreducible projective variety $B$ of dimension $b$ with $1 \le b \le \frac{n}{2}$, $p: X \cong \P(\F) \to B$ is the projection and $\G$ is a rank $r$ vector bundle on $B$ such that $H^q(\G \otimes S^k \F^*)=0$ for $q \ge 0, 0 \le k \le b-1$.
\end{itemize}
\end{bcor}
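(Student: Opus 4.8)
The statement is an equivalence, and I would treat the two directions together through the cohomological computation at the end, but the substantial content is the forward implication, which I organize as follows. The hypothesis $c_1(\E)^{\lfloor n/2\rfloor+1}=0$ says $\nu(\det\E)\le\lfloor n/2\rfloor$; in particular $c_1(\E)^n=0$, so $\det\E$ is not big and Corollary \ref{cor3} applies. Write $b:=\nu(\det\E)$ and $\phi:=\phi(\E)$. First I would record the equality $\phi=n-b$: the bound $\phi\ge n-b$ is Theorem \ref{main2}, while the reverse follows because $\E$ is trivial on the general fiber $F$ of $\Phi_{\E}$. Indeed $\Phi_{\E}$ is constant on $F$, so the evaluation $H^0(\E)\otimes\O_F\to\E_{|F}$ factors through a fixed $r$-dimensional quotient, forcing $\E_{|F}\cong\O_F^{\oplus r}$ and hence $\det\E_{|F}\cong\O_F$; this gives $\nu(\det\E)\le\dim\Phi_{\E}(X)=n-\phi$. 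Thus $\phi=n-b\ge n-\lfloor n/2\rfloor=\lceil n/2\rceil$, and by Theorem \ref{main2} and Corollary \ref{cor3} the variety $X$ is covered by the fibers of $\Phi_{\E}$, which are pairwise disjoint linear subspaces $\P^{\phi}$ of $\P^N$, each of dimension at least $\lceil n/2\rceil$.

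The heart of the argument is to upgrade this covering to a projective-bundle structure. Since the $\P^{\phi}$'s are the fibers of the morphism $\Phi_{\E}$ they are pairwise disjoint, and $\phi\ge\lceil n/2\rceil\ge\dim\Phi_{\E}(X)=b$. I would invoke the classification of smooth varieties swept out by linear spaces of more than half the dimension to conclude that either $b=0$ and $X=\P^n$, which is case (i), or else $\Phi_{\E}$ (after normalizing its image) exhibits $X\cong\P(\F)$ as a linear $\P^{n-b}$-bundle $p:X\to B$ over a smooth projective $B$ of dimension $b\le n/2$, with $\O_X(1)=\O_{\P(\F)}(1)$; as $\O_X(1)$ is very ample, $\F$ is then a very ample vector bundle of rank $n-b+1$. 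The delicate point, and where I expect the main difficulty, is the boundary case $\phi=n/2$ (so $n$ even and $b=n/2$), where such covering theorems typically allow additional examples like quadrics. Here the rigidity coming from the linear spaces being \emph{disjoint} fibers of a morphism onto a base of dimension exactly $b$ should exclude these and still produce a genuine bundle, but this is the step that requires care.

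With the bundle structure in hand I would pin down $\E$. I already have $\E_{|F}\cong\O_F^{\oplus r}$ on every fiber $F=\P^{n-b}$, and since $H^i(\P^{n-b},\O^{\oplus r})=0$ for $i>0$, cohomology and base change shows that $\G':=p_*\E$ is locally free of rank $r$ and that the evaluation $p^*p_*\E\to\E$ is an isomorphism, whence $\E\cong p^*\G'$. Setting $\G:=\G'\otimes(\det\F)^{-1}$ yields $\E\cong p^*(\G(\det\F))$, which is the form in (ii).

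Finally I would match the Ulrich condition with the stated vanishing, which at the same time proves the reverse implication. For $\E=p^*(\G(\det\F))$ and a twist $1\le s\le n$, the projection formula together with relative Serre duality along the $\P^{n-b}$-fibers gives $R^j p_*(\E(-s))=0$ for $1\le s\le n-b$ (all fibrewise cohomology vanishes), while for $s=(n-b)+1+k$ with $0\le k\le b-1$ only $R^{n-b}p_*(\E(-s))\cong\G\otimes S^k\F^*$ survives; the twist by $\det\F$ is precisely what cancels the relative dualizing sheaf $\O(-(n-b+1))\otimes p^*\det\F$. The Leray spectral sequence then degenerates and gives $H^i(X,\E(-s))\cong H^{i-(n-b)}(B,\G\otimes S^k\F^*)$, so these groups vanish for all $i$ and all $1\le s\le n$ if and only if $H^q(B,\G\otimes S^k\F^*)=0$ for all $q\ge 0$ and $0\le k\le b-1$, exactly the condition in (ii). Read forward this recovers the condition from $\E$ being Ulrich, and read backward it shows that every pair in (i)--(ii) is Ulrich; moreover $\det\E=p^*(\det\G\otimes(\det\F)^{\otimes r})$ is pulled back from the $b$-dimensional base $B$, so $c_1(\E)^{b+1}=0$, and $b\le\lfloor n/2\rfloor$ gives $c_1(\E)^{\lfloor n/2\rfloor+1}=0$.
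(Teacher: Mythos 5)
Your road map coincides with the paper's (via \eqref{pc} and Theorem \ref{main2}/Corollary \ref{cor3} get a covering of $X$ by linear spaces of dimension $\phi(\E)\ge \lceil \frac{n}{2}\rceil$, invoke Sato's classification \cite[Main Thm.]{sa2}, descend $\E$ along the resulting projection, and translate Ulrichness into the vanishing condition on $\G$), but the two steps you leave vague are precisely where the paper does its real work, and as written both are genuine gaps. First, the classification you invoke does not give the dichotomy you attribute to it. When $n$ is even you only have linear spaces of dimension exactly $\frac{n}{2}$, and in that boundary case Sato's theorem admits non-scroll exceptions (your own example of quadrics, and also Pl\"ucker-embedded Grassmannians of lines), not just $\P^n$ and scrolls; you acknowledge that excluding them ``requires care'' but never do it. The paper disposes of them before ever invoking Sato, by splitting off the case $\rho(X)=1$: there $\det\E$ is nef and globally generated with $(\det\E)^n=0$, hence $\det\E\cong\O_X$, hence $\E\cong\O_X^{\oplus r}$, so $rd=h^0(\E)=r$ forces $d=1$ and $X=\P^n$. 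Since the non-scroll cases in Sato's list all have Picard number one, only the scroll case survives once $\rho(X)\ge 2$. Your alternative idea (disjointness of the fibers of $\Phi_{\E}$ is incompatible with the exceptional varieties) can likely be made to work, but it needs a case-by-case verification that you do not carry out.

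Second, and more seriously, Sato's theorem does not say that $\Phi_{\E}$ (after Stein factorization) \emph{is} the bundle projection: it produces some scroll structure $p:X\cong\P(\F)\to B$ whose general fibers merely \emph{contain} the general fibers of $\Phi_{\E}$, so a priori one only gets $\dim B\le n-\phi(\E)$. Your descent step uses ``$\E_{|F}\cong\O_F^{\oplus r}$ on every fiber $F$ of $p$'', but your constancy argument proves this only on fibers of $\Phi_{\E}$. Identifying the two fibrations is the core of the paper's proof: it shows that $\Phi$ contracts every fiber of $p$ (a dimension count using the open set $U$ supplied by \cite[Main Thm.]{sa2}), deduces $p^{-1}(p(x))\subseteq\Phi^{-1}(\Phi(x))$ for all $x$, upgrades this to equality because a linear space cannot contain two disjoint $\P^{n-b}$'s when $b\le\frac{n}{2}$, and then uses Stein factorization together with \cite[Lemma 1.15(b)]{de} to get $B\cong B_1$ and $\E=\Phi^*\U\cong p^*\H$. (A shortcut in the spirit of your sketch does exist: $\det\E$ is trivial on the positive-dimensional $\P^{\phi}$ inside a general fiber of $p$, hence trivial on that whole fiber, hence -- degrees on the connected family of lines in fibers being constant -- on every fiber, and then global generation forces $\E_{|F}$ trivial; but some such argument must be supplied, and you supply none.) Your concluding Leray/relative-duality computation, replacing the paper's citation of \cite[Lemma 4.1]{lo}, is fine and is the one part of the proposal that is genuinely complete.
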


%
%
%
%
%

The above result improves \cite[Thm.~3]{lo}, that was proved only in dimension $3$ and with several exceptions. 
Note that the power $\lfloor \frac{n}{2} \rfloor +1$ in Corollary \ref{cor4} is sharp, see Remark \ref{sha}. 


In the above corollary naturally appears the case when $X$ is a linear $\P^k$-bundle over a smooth variety. In fact, in this case, we can classify Ulrich vector bundles when the numerical dimension of its determinant does not exceed $k$.

\begin{bcor}
\label{cor5}

\hskip 3cm

Let $B$ be a smooth irreducible variety of dimension $b \ge 1$ and let $\F$ be a very ample rank $n-b+1 \ge 1$ vector bundle on $B$. Let $X = \P(\F) \subset \P^N$ be embedded with the tautological line bundle $H$ and let $p: X \to B$ be the projection. Let $\E$ be a rank $r$ vector bundle on $X$. 

Then $\E$ is Ulrich with $c_1(\E)^{n-b+1}=0$ if and only if $(X,H,\E)$ is as follows:
\begin{itemize}
\item[(i)] $(\P^b \times \P^{n-b}, \O_{\P^b}(1) \boxtimes \O_{\P^{n-b}}(1), q^*(\O_{\P^{n-b}}(b))^{\oplus r})$, where $q:X \to \P^{n-b}$ is the second projection;
\item[(ii)] $(\P(\F), \O_{\P(\F)}(1), p^*(\G(\det \F)))$, where $\G$ is a rank $r$ vector bundle on $B$ such that $H^q(\G \otimes S^k \F^*)=0$ for $q \ge 0$ and $0 \le k \le b-1$. Moreover $b \le \frac{n}{2}$ and $c_1(\G(\det \F))^b \ne 0$.
\end{itemize} 
\end{bcor}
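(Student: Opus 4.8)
My plan is to treat the two directions separately, the ``if'' direction being a direct verification and the ``only if'' direction being governed by a single numerical dichotomy. For ``if'' I would check the two families by hand. In case (i) one has $\E(-p)\cong(\O_{\P^b}(-p)\boxtimes\O_{\P^{n-b}}(b-p))^{\oplus r}$, and Künneth shows that for $1\le p\le b$ the factor $H^\bullet(\P^b,\O_{\P^b}(-p))$ vanishes while for $b+1\le p\le n$ the factor $H^\bullet(\P^{n-b},\O_{\P^{n-b}}(b-p))$ vanishes, so $\E$ is Ulrich; since $c_1(\E)=rb\,q^*\O_{\P^{n-b}}(1)$ and $q$ factors through $\P^{n-b}$, we get $c_1(\E)^{n-b+1}=0$. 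In case (ii), $\E=p^*(\G(\det\F))$ is a pullback, so computing $R^\bullet p_*\O_{\P(\F)}(-p)$ (which vanishes for $1\le p\le n-b$, while $R^{n-b}p_*\O_{\P(\F)}(-p)\cong(S^{p-(n-b+1)}\F)^\vee\otimes(\det\F)^\vee$ for $n-b+1\le p\le n$) turns the Ulrich condition into precisely $H^q(\G\otimes S^k\F^*)=0$ for $q\ge0$ and $0\le k\le b-1$; being pulled back from the $b$-dimensional base, $\E$ has $c_1(\E)^{b+1}=0$, and $b\le\frac n2$ yields $c_1(\E)^{n-b+1}=0$. This is the computation already underlying Corollary \ref{cor4}(ii).

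For ``only if'', since $b\ge1$ we have $c_1(\E)^n=0$, so $\det\E$ is not big and Corollary \ref{cor3} applies: $\nu(\E)+\phi(\E)=n+r-1$ and $X$ is covered by linear $\P^{\phi(\E)}$'s, the general fibers of $\Phi_\E$, on each of which $\E$ is trivial (as in the proof of Theorem \ref{main2}); moreover Theorem \ref{main2} gives $\phi(\E)\ge n-\nu(\det\E)\ge b$. Writing numerically $\det\E\equiv aH+p^*M$ with $a=\deg(\det\E_{|F})\ge0$ on a fibre $F\cong\P^{n-b}$ of $p$ (nonnegative since $\det\E$ is nef), I would show that $a=0$ forces case (ii) and $a>0$ forces case (i). If $a=0$ then $\det\E=p^*M$, so for every $F$ the bundle $\E_{|F}$ is globally generated with $\det(\E_{|F})=\O_F$; as $\O_{{\mathbb G}}(1)$ is ample, the Grassmannian map of $\E_{|F}$ is constant and $\E_{|F}\cong\O_F^{\oplus r}$. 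By cohomology and base change $\E\cong p^*\G'$ with $\G'=p_*\E$ locally free of rank $r$, and with $\G:=\G'\otimes(\det\F)^\vee$ the Ulrich condition gives $H^q(\G\otimes S^k\F^*)=0$ as above; here $\phi(\E)=n-b\ge b$ forces $b\le\frac n2$, and $c_1(\G(\det\F))^b\ne0$ (equivalently $\det\G'$ big, equivalently $\phi(\E)=n-b$ rather than larger) is the non-degeneracy which, if it failed, would drop $\nu(\det\E)$ below $b$ and return us to Corollary \ref{cor4}. This is case (ii).

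If $a>0$, then on a general fibre $\Lambda\cong\P^{\phi(\E)}$ of $\Phi_\E$ we have $\E_{|\Lambda}\cong\O_\Lambda^{\oplus r}$, hence $(aH+p^*M)_{|\Lambda}=\O_\Lambda$; since $a>0$ and $H_{|\Lambda}=\O_{\P^{\phi(\E)}}(1)$, the space $\Lambda$ cannot lie in a fibre of $p$, and $p_{|\Lambda}$ is finite onto a positive-dimensional image with $(p_{|\Lambda})^*M=\O_{\P^{\phi(\E)}}(-a)$. When $b>\frac n2$ this case is forced, because $\phi(\E)\ge b>n-b$ excludes (ii); and then $\nu(\det\E)\le n-b\le\lfloor\frac n2\rfloor$, so Corollary \ref{cor4}(ii) supplies a \emph{second} projective bundle structure $X=\P(\F')$ whose base has dimension $\le\frac n2<b$. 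Reconciling two distinct linear $\P$-bundle structures on $X$ (both with tautological divisor $H$, hence both with linearly embedded fibres) forces $X\cong\P^b\times\P^{n-b}$ with $\F\cong\O_{\P^b}(1)^{\oplus(n-b+1)}$, the family $\{\Lambda\}$ being the fibres of the second projection $q$, and $\E$ the unique Ulrich $q$-pullback $q^*\O_{\P^{n-b}}(b)^{\oplus r}$, which is case (i).

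I expect the main obstacle to be the remaining range $b\le\frac n2$ with $a>0$, where $\nu(\det\E)$ may exceed $\lfloor\frac n2\rfloor$ and Corollary \ref{cor4} is unavailable: here one has only a covering of $X=\P(\F)$ by linear spaces transverse to $p$, and must deduce the product structure $X\cong\P^b\times\P^{n-b}$ directly. This is in essence a characterization of the product among scrolls, and the tool should again be the very ampleness of $\F$, which makes every fibre of $p$ a linearly embedded $\P^{n-b}$ and forces the positive-dimensional images $p(\Lambda)$ to be linear spaces along which $\F$ is split; carrying this out so as to pin down both $X$ and $\E$ is the delicate step.
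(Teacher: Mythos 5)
Your skeleton for the ``only if'' direction --- splitting according to whether $\det\E$ is trivial on the fibers of $p$ (your $a=0$, pullback case) or not ($a>0$) --- parallels the paper's own division into Case 1 and Case 2 (there phrased as: some fiber of $p$ equals a fiber of $\Phi_\E$, or no fiber does), and your ``if'' direction is correct. But the case you explicitly leave open, $a>0$ with $b\le\frac{n}{2}$, is the heart of the proof, not a residual technicality (it contains, for instance, every scroll over a curve whose Ulrich bundle has nontrivial determinant on the lines of the ruling), and the tool you propose for it (linearity of the images $p(\Lambda)$ and splitting of $\F$ along them) is not what closes it. The paper closes it uniformly in $b$, with no case distinction $b\le\frac{n}{2}$ versus $b>\frac{n}{2}$ and no appeal to Corollary \ref{cor4}, by a pointwise rigidity argument: writing $f_x=p^{-1}(p(x))$ and $F_x=\Phi_\E^{-1}(\Phi_\E(x))$, since $F_x$ is a linear space by Theorem \ref{main2} and every morphism from a projective space is either constant or finite, one gets that either $f_x\cap F_x=\{x\}$ or $f_x=F_x$ (Claim \ref{dicot}); and if $f_x\ne F_x$ for \emph{every} $x$, then $p_{|F_x}$ is injective for every $x$ (if $f_z\cap F_x$ contained a point other than $z$ for some $z\in F_x$, Claim \ref{dicot} would give $F_x=f_z$ and hence $f_x=F_x$), whence $b\le\phi(\E)\le\dim F_x\le\dim B=b$. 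So every fiber of $\Phi_\E$ is a linear $\P^b$, $B\cong\P^b$, $\Phi_\E(X)\cong\P^{n-b}$, and $\Phi_\E$ is itself a second linear $\P^b$-bundle structure on $X$ (Claim \ref{dicot2}); Sato's theorem \cite[Thm.~A]{sa1} then gives $(X,H)\cong(\P^b\times\P^{n-b},\O_{\P^b}(1)\boxtimes\O_{\P^{n-b}}(1))$, and \cite[Lemma 4.1]{lo} together with \cite[Prop.~2.1]{es} identifies $\E$ as $q^*(\O_{\P^{n-b}}(b))^{\oplus r}$. Your own ``two bundle structures'' move appears only in the subcase $b>\frac{n}{2}$, where you can first invoke Corollary \ref{cor4}; what you are missing is that the second structure can be extracted directly from the fibers of $\Phi_\E$, with no bound on $b$ needed.

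There is a second gap, in your $a=0$ case: the ``moreover'' clause of (ii). You justify $c_1(\G(\det\F))^b\ne0$ by saying that its failure ``would drop $\nu(\det\E)$ below $b$ and return us to Corollary \ref{cor4}'', but this is inconclusive: Corollary \ref{cor4}(ii) makes no nondegeneracy assertion on $c_1$, so no contradiction results, and the step as written is circular. The correct argument is the same rigidity as above: if $\E\cong p^*\G'$, then $\Phi_\E$ factors through $p$, so every fiber of $\Phi_\E$ is a union of fibers of $p$ and is a projective space by Theorem \ref{main2}; if one had $\phi(\E)>n-b$, the general fiber $F\cong\P^{\phi(\E)}$ would map onto a positive-dimensional subvariety of $B$ with all fibers of dimension $n-b>0$, which is impossible for a morphism from a projective space. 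Hence the fibers of $\Phi_\E$ and of $p$ coincide and $\phi(\E)=n-b$, which yields both $c_1(\G(\det\F))^b\ne0$ (since $\nu(\det\E)=n-\phi(\E)=b$) and $b\le\frac{n}{2}$ (since $\phi(\E)\ge b$). Finally, note that your normalization $\det\E\equiv aH+p^*M$ silently assumes $n\ge b+1$; the degenerate case $n=b$ (rank one $\F$, $X\cong B$) needs the separate one-line treatment the paper gives it.
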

Finally, in the case of $\P^b \times \P^{n-b}$, a more precise result can be proved, see Corollary \ref{prodotto}.

\section{Notation and standard facts about Ulrich vector bundles}

We collect in this section some useful definitions, notation and facts. 

Given a nef line bundle $\L$ on a smooth variety $X$ we denote by
$$\nu(\L) = \max\{k \ge 0: \L^k \ne 0\}$$ 
the numerical dimension of $\L$.

\begin{defi}
Let $X$ be a smooth irreducible variety and let $\E$ be a vector bundle on $X$. We say that $\E$ is {\it nef (big, ample, very ample)} if $\O_{\P(\E)}(1)$ is nef (big, ample, very ample). If $\E$ is nef, we define the numerical dimension of $\E$ by
$\nu(\E):=  \nu(\O_{\P(\E)}(1))$. We will denote by $\pi : \P(\E) \to X$ the projection map.
\end{defi}

\begin{defi}
Let $X$ be a smooth irreducible variety of dimension $n \ge 1$ and let $\E$ be a globally generated rank $r$ vector bundle on $X$. We set $\phi(\E)$ to be the dimension of the general fiber of $\Phi_{\E} : X \to {\mathbb G}(r-1, \P H^0(\E))$. For every $x \in X$ we set $P_x = \varphi_{\E}(\P(\E_x))$. 
\end{defi}
Note that diagram \eqref{muk} implies that
\begin{equation}
\label{pc}
\phi(\E)=n-\nu(\det \E).
\end{equation}
Moreover note that the point $\Phi(x) \in {\mathbb G}(r-1,\P H^0(\E))$ corresponds to the linear subspace $P_x \subseteq \P H^0(\E)$. We will write this as
$$\Phi(x)= [P_x]$$
in order to distinguish the subspace  $P_x \subseteq \P H^0(\E)$ with the point $[P_x] \in  {\mathbb G}(r-1,\P H^0(\E))$. 

A simple fact that will be useful is the following.

\begin{remark}
\label{inj}
For every $x \in X$ the restriction morphism
$$\varphi_{|\P(\E_x)} : \P(\E_x) \to P_x$$
is an isomorphism onto a linear subspace of dimension $r-1$ in $\P^{rd-1}$. In particular 
$\pi_{|F} : F \to X$ is a closed embedding on any scheme theoretic fiber $F$ of $\varphi$. 
\end{remark}

We now turn to a few generalities on Ulrich vector bundles. 
\begin{defi}
%

Let $X \subseteq \P^N$ be a smooth irreducible variety of dimension $n \ge 1$ and let $\E$ be a vector bundle on $X$. We say that $\E$ is an {\it Ulrich vector bundle} if $H^i(\E(-p))=0$ for all $i \ge 0$ and $1 \le p \le n$.
\end{defi}

Observe that if $X \subseteq \P^N$ is a smooth irreducible variety of degree $d$ and $\E$ is a rank $r$ Ulrich vector bundle on $X$, then $\E$ is $0$-regular in the sense of Castelnuovo-Mumford. Hence $\E$  is globally generated (see for instance \cite[Thm.~1.8.5]{laz}). Moreover $\E$ is ACM and $h^0(\E)=rd$ (see for example \cite[Prop.~2.1]{es} or \cite[(3.1)]{b1}). These facts will be often used without further reference.

\section{Separation lemmas}

We start with some preliminary results, the goal being understanding the separation properties of the linear systems $|\O_{\P(\E)}(1)|$ and $|\Im \lambda_{\E}|$ in the case $\E$ is an Ulrich vector bundle.


\begin{lemma}
\label{rest}

Let $X  \subseteq \P^N$ be a smooth irreducible variety of dimension $n \ge 1$. Let $\E$ be a rank $r$ Ulrich vector bundle on $X$. Let $s \in \{1, \ldots, n\}$ and let $H_1, \dots, H_s$ be hyperplane sections of $X$. Let $X_s = H_1 \cap \dots \cap H_s$ and suppose that $X_s$ is of 
dimension $n-s$. 

Then $H^i(\E_{|X_s}(-p))=0$ for all $i \ge 0$ and $1 \le p \le n-s$. Moreover
$$H^0(\E) \to H^0(\E_{|X_s})$$
is an isomorphism.
\end{lemma}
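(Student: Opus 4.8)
The plan is to prove both assertions by induction on $s$, reducing everything to the case $s=1$, that is, to a single hyperplane section $X_1 = H_1$ which is a smooth-in-dimension (i.e.\ $(n-1)$-dimensional) divisor cut out by $\O_X(1)$. For the inductive step the key observation is that $X_s$ is itself obtained from $X_{s-1}$ by intersecting with one further hyperplane $H_s$, and that the restriction of an Ulrich bundle to such a section behaves well; so if I can establish the statement for $s=1$ in the requisite generality, the general $s$ follows by iterating, provided the intermediate intersections $X_1, \dots, X_{s-1}$ are all of the expected dimension. (Note $X_s$ being of dimension $n-s$ forces each $X_j$ for $j<s$ to have dimension $n-j$, since the dimension can drop by at most one at each hyperplane cut.)

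For the base case $s=1$, I would use the restriction exact sequence
\begin{equation*}
0 \to \E(-p-1) \to \E(-p) \to \E_{|X_1}(-p) \to 0,
\end{equation*}
coming from twisting $0 \to \O_X(-1) \to \O_X \to \O_{X_1} \to 0$ by $\E(-p)$. Taking the long exact sequence in cohomology, the vanishing of $H^i(\E_{|X_1}(-p))$ for $1 \le p \le n-1$ is squeezed between $H^i(\E(-p))$ and $H^{i+1}(\E(-p-1))$; since $\E$ is Ulrich, both of these vanish for $i \ge 0$ as long as the twists $-p$ and $-p-1$ stay in the Ulrich range $\{-1, \dots, -n\}$, which is exactly guaranteed by $1 \le p \le n-1$ (so $2 \le p+1 \le n$). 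Thus $H^i(\E_{|X_1}(-p)) = 0$ for all $i \ge 0$ and $1 \le p \le n-1$, which says precisely that $\E_{|X_1}$ is an Ulrich bundle on $X_1 \subseteq \P^{N-1}$. For the isomorphism on global sections, I would take the same sequence with $p=0$: the relevant segment reads
\begin{equation*}
H^0(\E(-1)) \to H^0(\E) \to H^0(\E_{|X_1}) \to H^1(\E(-1)),
\end{equation*}
and both outer terms vanish because $\E(-1)$ sits in the Ulrich range (take $p=1$), giving the isomorphism $H^0(\E) \iso H^0(\E_{|X_1})$.

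For the inductive step, I would assume the conclusion for $s-1$, so that $\E_{|X_{s-1}}$ is Ulrich on $X_{s-1}$ and $H^0(\E) \iso H^0(\E_{|X_{s-1}})$. Applying the base case to the bundle $\E_{|X_{s-1}}$ on $X_{s-1}$, cut by the single hyperplane $H_s$, yields that $\E_{|X_s} = (\E_{|X_{s-1}})_{|X_s}$ is Ulrich on $X_s$ with the vanishing $H^i(\E_{|X_s}(-p)) = 0$ for $1 \le p \le (n-s+1)-1 = n-s$, and an isomorphism $H^0(\E_{|X_{s-1}}) \iso H^0(\E_{|X_s})$. Composing the two isomorphisms gives $H^0(\E) \iso H^0(\E_{|X_s})$, completing the induction.

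The one point requiring care — and the place I expect the only real friction — is dimensional bookkeeping: the base-case argument implicitly treats $\E_{|X_{s-1}}$ as an Ulrich bundle on the variety $X_{s-1}$, and I must make sure the hypotheses of ``Ulrich on a smooth irreducible variety of the correct dimension'' genuinely persist down the chain. The hypothesis that $X_s$ has dimension $n-s$ guarantees each cut is a genuine (nonempty, codimension-one) divisor rather than an unexpected drop, so the restriction sequences are exact with the expected kernel $\E_{|X_{j-1}}(-p-1)$, and the cohomological squeeze goes through verbatim at each stage. Apart from verifying that these intermediate loci remain in the situation to which the base case applies, the argument is a routine diagram chase through the restriction sequence, with the Ulrich range $\{1, \dots, n\}$ shrinking by exactly one at each hyperplane cut.
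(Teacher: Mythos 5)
Your proof is correct and follows essentially the same route as the paper: an induction on the number of hyperplane cuts, using the twisted restriction sequence $0 \to \E_{|X_{j-1}}(-p-1) \to \E_{|X_{j-1}}(-p) \to \E_{|X_j}(-p) \to 0$ and its long exact cohomology sequence, with the vanishing range shrinking by one at each cut. The only cosmetic difference is that the paper carries the vanishing and the $H^0$-isomorphism simultaneously through a single induction on $j$ (never needing to call the intermediate restrictions ``Ulrich,'' which sidesteps the smoothness/irreducibility caveat you flag), rather than isolating a base case $s=1$ and iterating it.
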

\begin{proof}
For $0 \le j \le s$ set $X_0=X$ and $X_j = H_1 \cap \dots \cap H_j$. We prove that, for any $0 \le j \le s$, the following hold:
\begin{equation}
\label{ul} 
H^i(\E_{|X_j}(-p))=0 \ \hbox{for all} \ i \ge 0 \ \hbox{and} \ 1 \le p \le n-j,
\end{equation}
and that
\begin{equation}
\label{iso} 
H^0(\E) \to H^0(\E_{|X_j}) \ \hbox{is an isomorphism.}
\end{equation}

If $j=0$ note that \eqref{ul} is just the fact that $\E$ is Ulrich and \eqref{iso} is obvious. If $j \ge 1$ assume that \eqref{ul} and \eqref{iso} hold for $j-1$ and consider the exact sequence
$$0 \to \E_{|X_{j-1}}(-p-1) \to \E_{|X_{j-1}}(-p) \to \E_{|X_j}(-p) \to 0.$$
Then clearly \eqref{ul} holds for $j$ and this also gives that $H^0(\E_{|X_{j-1}}) \to H^0(\E_{|X_j})$ is an isomorphism. Therefore so is $H^0(\E) \to H^0(\E_{|X_j})$. This proves \eqref{iso} for $j$ and the lemma.
\end{proof}

We now apply the previous lemma to study the injectivity of $\varphi_{\E}$ and $\Phi_{\E}$ and of their differentials.

First we study the case when two points, possibly infinitely near, do not belong to a line contained in $X$.

\begin{lemma}
\label{span}

Let $X  \subseteq \P^N$ be a smooth irreducible variety of dimension $n \ge 1$ and degree $d$. Let $\E$ be a rank $r$ Ulrich vector bundle on $X$. Let $Z \subset X$ be a $0$-dimensional subscheme of length $2$ and let $L \subset \P^N$ be the line generated by $Z$ and suppose that $L \not\subset X$. Then
$$H^0(\E) \to H^0(\E_{|Z})$$
is surjective. Now assume that $Z=\{x,x'\}$ with $x \ne x'$. Then 
$$P_x \cap P_{x'} = \emptyset$$ 
hence, in particular
$$\Phi_{\E}(x) \ne \Phi_{\E}(x').$$
\end{lemma}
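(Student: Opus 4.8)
The plan is to reduce the whole statement to a cohomological computation on a curve obtained by cutting $X$ with hyperplanes through $Z$, and then to read off the statement about the linear spaces $P_x, P_{x'}$ by a dimension count. First I would choose a linear subspace $\Lambda \subset \P^N$ of codimension $n-1$ containing $Z$; being linear, $\Lambda$ automatically contains the span $L$ of $Z$. For a general such $\Lambda$ the intersection $C := X \cap \Lambda$ is a curve (of dimension $1$, the intersection staying proper along $L$ precisely because $L \not\subset X$) which contains $Z$ and is smooth along $\Supp(Z)$; this last point is a Bertini/general position argument using that $X$ is smooth and that the hyperplanes through $Z$ separate the length-$2$ scheme $Z$. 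Writing $C = H_1 \cap \dots \cap H_{n-1} \cap X$ and applying Lemma \ref{rest} with $s = n-1$, I obtain that $H^0(\E) \to H^0(\E_{|C})$ is an isomorphism and that $H^i(\E_{|C}(-1)) = 0$ for all $i$, in particular $H^1(\E_{|C}(-1)) = 0$.

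The essential use of the hypothesis $L \not\subset X$ comes next. Since $L \not\subset X$ we have $L \not\subset C$, so I may pick a hyperplane $H \supset L$ with $C \not\subset H$; then $H \cap C$ is an effective Cartier divisor on $C$ of degree $d = \deg X$ containing $Z$ (because $Z \subset L \subset H$, $Z \subset C$, and $C$ is smooth at $\Supp Z$), so $H \cap C = Z + D'$ with $D' \ge 0$, and hence $\O_C(-Z) \cong \O_C(-1) \otimes \O_C(D')$. Tensoring the corresponding sequence by $\E_{|C}$ gives $0 \to \E_{|C}(-1) \to \E_{|C}(-Z) \to \cQ \to 0$ with $\cQ$ supported on the finite scheme $D'$; since $H^1(\cQ) = 0$ and $H^1(\E_{|C}(-1)) = 0$, I conclude $H^1(\E_{|C}(-Z)) = 0$. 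Then the sequence $0 \to \E_{|C}(-Z) \to \E_{|C} \to \E_{|Z} \to 0$ shows that $H^0(\E_{|C}) \to H^0(\E_{|Z})$ is surjective, and composing with the isomorphism $H^0(\E) \cong H^0(\E_{|C})$ yields the surjectivity of $H^0(\E) \to H^0(\E_{|Z})$, proving the first assertion. (Note that $d \ge 2$ is automatic here: if $d=1$ then $X$ is linear and $Z \subset X$ would force $L \subset X$.)

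For the statement on the $P_x$ I would translate surjectivity into a span computation. When $Z = \{x, x'\}$ with $x \ne x'$ we have $H^0(\E_{|Z}) = \E_x \oplus \E_{x'}$, and the join $\langle P_x, P_{x'}\rangle$ is the linear span of the image of $\P(\E_{|Z})$ under the map attached to $\Im(H^0(\E) \to H^0(\E_{|Z}))$; hence it has dimension $\dim \Im(H^0(\E) \to H^0(\E_{|Z})) - 1$. By the surjectivity just proved this dimension equals $2r - 1$. Since $P_x$ and $P_{x'}$ are linear of dimension $r-1$ each (Remark \ref{inj}), two such subspaces can span a $\P^{2r-1}$ only if they are disjoint, so $P_x \cap P_{x'} = \emptyset$; as $r \ge 1$ each $P_x$ is nonempty, so disjointness forces $\Phi_{\E}(x) = [P_x] \ne [P_{x'}] = \Phi_{\E}(x')$.

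I expect the main obstacle to be the first step, namely producing a curve section $C$ through $Z$ that is smooth along $\Supp(Z)$: the hyperplanes cutting it are forced to contain $L$, so $L$ lies in their base locus and the usual Bertini smoothness must be checked by hand at the points of $Z$ (separation of tangent directions there, valid also in the non-reduced case where $Z$ is a point with an embedded tangent lying in $T_xX$). Everything after that is a formal consequence of Lemma \ref{rest} together with the hypothesis $L \not\subset X$.
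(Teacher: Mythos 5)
Your proof is correct, but it takes a genuinely different route from the paper's in the first (and main) step. The paper cuts all the way down to dimension zero: it takes $n$ general hyperplanes through $Z$ (equivalently, through $L$), uses $L \not\subset X$ to see that $X_n = H_1 \cap \dots \cap H_n$ is a finite scheme of length $d$ containing $Z$, applies Lemma \ref{rest} with $s=n$ to get $H^0(\E) \isomarrow H^0(\E_{|X_n})$, and then the surjectivity onto $H^0(\E_{|Z})$ is automatic because restriction between $0$-dimensional schemes is always surjective. This completely sidesteps the two points where your argument has to work: (a) the Bertini-type verification that the curve $C$ is smooth along $\Supp(Z)$ despite $L$ lying in the base locus (your flagged ``main obstacle''; your claim is in fact correct --- for general $\Lambda \supset L$ one checks $T_xX \cap \Lambda$ is a line whether or not $L \subset T_xX$), and (b) the divisor manipulation $H \cap C = Z + D'$ combined with $H^1(\E_{|C}(-1))=0$. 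Your curve-level argument is essentially the technique the paper reserves for Lemma \ref{rette}, where $L \subset X$ forces one to stop at a curve section; adapting it to the present case works but costs extra care. One small repair: choosing $H \supset L$ with merely $C \not\subset H$ is not enough, since $H$ could still contain an irreducible component of $C$ and then $H \cap C$ would not be a Cartier divisor; you should take $H \supset L$ general (no component of $C$ lies in $L$, as $L \not\subset X$, so a general such $H$ contains no component of $C$). Your second half --- identifying $\langle P_x, P_{x'}\rangle$ with $\P$ of the image of $H^0(\E) \to H^0(\E_{|Z})$, hence of dimension $2r-1$, forcing disjointness --- is the same computation as the paper's, which phrases it via $H^0(\I_{\P(\E_x) \cup \P(\E_{x'})/\P(\E)} \otimes \O_{\P(\E)}(1)) \cong H^0(\I_{Z/X} \otimes \E)$.
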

\begin{proof}
Let $H_1, \dots, H_n$ be general hyperplane sections containing $Z$. Then $X_n = H_1 \cap \dots \cap H_n$ is a $0$-dimensional subscheme of $X$ of length $d$ containing $Z$, 
as $L\not\subset X$. 
By Lemma \ref{rest} we have that $H^0(\E) \to H^0(\E_{|X_n})$ is surjective, hence so is $H^0(\E) \to H^0(\E_{|Z})$. Therefore $h^0(\I_{Z/X} \otimes \E)=h^0(\E)-2r=r(d-2)$. If $Z=\{x,x'\}$ with $x \ne x'$, considering the isomorphism
$$H^0(\I_{\P(\E_x) \cup \P(\E_{x'})/\P(\E)} \otimes \O_{\P(\E)}(1)) \cong H^0(\I_{\{x,x'\}/X} \otimes \E)$$
we deduce that $\dim \langle P_x, P_{x'} \rangle = 2r-1$, hence that $P_x \cap P_{x'} = \emptyset$. 
\end{proof}

In the case two points, possibly infinitely near, belong to a line contained in $X$, we will use the following.

\begin{lemma}
\label{rette}

Let $X  \subseteq \P^N$ be a smooth irreducible variety of dimension $n \ge 1$. Let $\E$ be a rank $r$ Ulrich vector bundle on $X$. Let $L \subseteq X$ be a line. Then 
$$H^0(\E) \to H^0(\E_{|L})$$
is surjective. Moreover either $(\Phi_{\E})_{|L}$ is an isomorphism onto its image or it is constant and, in that case, $\E_{|L}$ is trivial. 
\end{lemma}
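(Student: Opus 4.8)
The plan is to prove Lemma~\ref{rette} in two parts, first establishing the surjectivity of the restriction map and then analyzing the two alternatives for $(\Phi_{\E})_{|L}$. For the surjectivity statement, I would imitate the strategy of Lemma~\ref{span}: cut $X$ down by $n$ general hyperplane sections, but this time choosing them so that $L \subseteq X$ is contained in the intersection. A cleaner approach is to cut by $n-1$ general hyperplanes through $L$ so that $X_{n-1} \cap (\text{one more general } H)$ leaves $L$ as a component of a $1$-dimensional slice; more precisely, take $H_1,\dots,H_{n-1}$ general hyperplane sections containing $L$, so that $X_{n-1}= H_1\cap\dots\cap H_{n-1}$ is a curve containing $L$ as a component. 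By Lemma~\ref{rest}, $H^0(\E)\to H^0(\E_{|X_{n-1}})$ is an isomorphism and $H^i(\E_{|X_{n-1}}(-p))=0$ for the relevant range. Since $L$ is a (reduced) component of the curve $X_{n-1}$, I would then show $H^0(\E_{|X_{n-1}})\to H^0(\E_{|L})$ is surjective, from which surjectivity of $H^0(\E)\to H^0(\E_{|L})$ follows. The needed vanishing comes from the exact sequence relating $\E_{|X_{n-1}}$, $\E_{|L}$, and $\E$ restricted to the residual scheme, combined with the Ulrich vanishing already verified for $X_{n-1}$.

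Once surjectivity is in hand, I would analyze $\E_{|L}$ directly. Since $\E$ is globally generated (being Ulrich, hence $0$-regular), its restriction $\E_{|L}$ is a globally generated rank $r$ bundle on $L\cong \P^1$, so it splits as $\E_{|L}\cong \bigoplus_{i=1}^r \O_{\P^1}(a_i)$ with all $a_i\ge 0$. The key numerical input is the degree: because $\E$ is Ulrich and $L$ is a line (degree $1$ in $\P^N$), one computes $\deg(\E_{|L}) = \sum a_i = c_1(\E)\cdot L$, and I expect the Ulrich condition to force this degree to equal $r$ (this is the line-restriction normalization characteristic of Ulrich bundles, paralleling the fact that $\E_{|L}\cong \O_{\P^1}\oplus\O_{\P^1}(1)$ in the rank $2$ example mentioned in the introduction). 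The map $(\Phi_{\E})_{|L}$ sends $L$ to ${\mathbb G}(r-1,\P H^0(\E))$ by $t\mapsto [P_t]$, and via the surjectivity of $H^0(\E)\to H^0(\E_{|L})$ this factors through the map determined by the splitting type.

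The dichotomy then falls out from the splitting type $(a_1,\dots,a_r)$ with $\sum a_i = r$ and each $a_i\ge 0$. Either all $a_i=1$, in which case $\E_{|L}\cong \O_{\P^1}(1)^{\oplus r}$ is ample and the induced map $\P^1\to {\mathbb G}(r-1,\dots)$ is a morphism sending distinct points (even tangent directions) to distinct, disjoint linear spaces $P_t$ — giving an isomorphism onto its image; or some $a_i=0$, forcing a trivial summand. In the extreme case all $a_i=0$ we get $\E_{|L}\cong\O_{\P^1}^{\oplus r}$ trivial and $\Phi_{\E}$ constant on $L$. The remaining task, and the main obstacle, is to rule out the mixed case where the splitting has some but not all $a_i=0$: one must show that the presence of even a single positive $a_i$ already forces $(\Phi_{\E})_{|L}$ to be an isomorphism onto its image (separating points and tangents along $L$), so that the only way the map fails to be an embedding is the fully trivial case. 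I would handle this by a separation-of-points argument on $L$ analogous to Lemma~\ref{span}, showing that whenever $\E_{|L}$ is nontrivial the spaces $P_t$ for distinct $t\in L$ are distinct, using the surjectivity $H^0(\E)\to H^0(\E_{|Z})$ for length-two subschemes $Z\subset L$ that holds precisely when $\E_{|L}$ separates the corresponding points. This reduces the whole statement to a clean computation on $\P^1$ using the explicit splitting type.
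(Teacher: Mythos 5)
Your first half (surjectivity of $H^0(\E) \to H^0(\E_{|L})$) follows essentially the paper's own route: cut by general hyperplane sections through $L$, apply Lemma \ref{rest} to the curve $X_{n-1} = C \cup L$, and push sections from $X_{n-1}$ down to $L$. Be aware, though, that the vanishing you need is $H^1(\E_{|C}(-D))=0$ with $D = C \cap L$, and this does not follow in one step from ``the Ulrich vanishing already verified for $X_{n-1}$'': the paper first deduces $H^1(\E_{|C}(-1))=0$ from the sequence $0 \to \E_{|L}(-D)(-1) \to \E_{|X_{n-1}}(-1) \to \E_{|C}(-1) \to 0$, and then uses an auxiliary hyperplane $H_n$ with $H_n \cap C = D + D'$ to pass from the twist by $-1$ to the twist by $-D$. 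That is a glossed-over but fixable step.

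The second half contains a genuine error and, as a consequence, a genuine gap. The error is the claimed normalization $\deg(\E_{|L}) = \sum a_i = r$: no such constraint holds for Ulrich bundles. Your own cited example refutes it (the spinor bundle restricts to lines as $\O_{\P^1} \oplus \O_{\P^1}(1)$: rank $2$, degree $1$), as does $\O_{\P^n}^{\oplus r}$ on $\P^n$, which is Ulrich and restricts trivially (degree $0$); the Del Pezzo example of Remark \ref{delp} even gives $\E_{|E_2} \cong \O_{\P^1}(2)$ with $r=1$. Worse, under your constraint the all-zero splitting --- precisely the ``constant'' alternative the lemma must allow --- can never occur, so the case analysis is inconsistent from the start. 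The gap is in the case you yourself flag as the main obstacle, a mixed splitting with some $a_i=0$ and some $a_i>0$: the tool you propose cannot close it. A Lemma \ref{span}-type argument proves \emph{disjointness} of $P_t$ and $P_{t'}$ via surjectivity of $H^0(\E) \to H^0(\E_{|Z})$, but in the mixed case that surjectivity genuinely fails (for $\O_{\P^1} \oplus \O_{\P^1}(1)$ one has $h^0 = 3 < 4 = 2r$) and the spaces $P_t$, $P_{t'}$ genuinely meet: they all pass through the vertex of the scroll coming from the trivial summand. What you actually need is mere \emph{distinctness} of the Grassmannian points, and the paper obtains it for free from diagram \eqref{muk} applied to $\E_{|L}$: since $H^0(\E) \to H^0(\E_{|L})$ is onto and $\lambda_{\E_{|L}}$ is surjective, the composition of $\Phi_{\E_{|L}}$ with the Pl\"ucker embedding factors through $\varphi_{\det(\E_{|L})}$, and $\det(\E_{|L}) = \O_{\P^1}(\sum a_i)$ is very ample as soon as some $a_i>0$; hence $(\Phi_{\E})_{|L}$ is an isomorphism onto its image unless all $a_i=0$, in which case it is constant and $\E_{|L}$ is trivial. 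Some argument of this kind (or a direct check that the kernels $H^0(\E_{|L}(-t)) \subset H^0(\E_{|L})$ distinguish the points $t \in L$ once some $a_i \ge 1$) must replace your disjointness step.
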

\begin{proof}
The lemma being obvious if $n=1$, we suppose that $n \ge 2$ and let $H_1, \dots, H_n$ be general hyperplane sections (of $X$) containing $L$. By Lemma \ref{rest} we have that $H^1(\E_{|X_{n-1}}(-1))=0$ and the map 
$$H^0(\E) \to H^0(\E_{|X_{n-1}})$$
is an isomorphism. Now $X_{n-1} = C \cup L$, where $C$ is a curve not containing $L$ or $C= \emptyset$, and in the latter case we are done. Let $D = C \cap L$. From the exact sequence 
$$0 \to \E_{|L}(-D)(-1) \to \E_{|X_{n-1}}(-1) \to \E_{|C}(-1) \to 0$$
we find that $H^1(\E_{|C}(-1)) = 0$. Moreover there is an effective divisor $D'$ on $C$ such that $H_n \cap C = D + D'$
and the exact sequence 
$$0 \to \E_{|C}(-1) \to \E_{|C}(-D) \to \E_{|D'}(-D) \to 0$$
implies that $H^1(\E_{|C}(-D)) = 0$. But then the exact sequence 
$$0 \to \E_{|C}(-D) \to \E_{|X_{n-1}} \to \E_{|L} \to 0$$
gives that the map 
$$H^0(\E_{|X_{n-1}}) \to H^0(\E_{|L})$$
is surjective and we deduce that the map 
$$H^0(\E) \to H^0(\E_{|L})$$
is also surjective.

Consider next the inclusion $\P(\E_{|L}) \subseteq \P(\E)$ so that $(\varphi_{\E})_{|\P(\E_{|L})} = \varphi_{\E_{|L}}$.
Let
$$\E_{|L} \cong \O_{\P^1}(a_1) \oplus \ldots \oplus \O_{\P^1}(a_r)$$ 
with $a_i \ge 0$ for every $1 \le i \le r$ (since $\E_{|L}$ is globally generated). Then the map
$$\lambda_{\E_{|L}} : \Lambda^r H^0(\E_{|L}) \to H^0(\det (\E_{|L}))$$
is surjective. We have diagram \eqref{muk} applied to $\E_{|L}$:
$$\xymatrix{L \ar[d]^{\varphi_{\det (\E_{|L})}} \ar[r]^{\hskip -1.4cm \Phi_{\E_{|L}}} & {\mathbb G}(r-1,h^0(\E_{|L})-1) \ar@{^{(}->}[d]^{P_{\E_{|L}}} \\ \P H^0(\det (\E_{|L})) \ar@{^{(}->}[r] & \P \Lambda^r H^0(\E_{|L})}.$$
Now if there is an $i \in \{1,\ldots,r\}$ with $a_i > 0$, then $\varphi_{\det (\E_{|L})}$ is an isomorphism on its image, and therefore so is $\Phi_{\E_{|L}}$. Therefore also $(\Phi_{\E})_{|L}$ is an isomorphism onto its image. 

On the other hand if $a_i=0$ for every $1 \le i \le r$, it follows that $\varphi_{\det (\E_{|L})}$ maps $L$ to a point, and then so do $\Phi_{\E_{|L}}$ and $(\Phi_{\E})_{|L}$.
\end{proof}
\begin{lemma}
\label{fibre}
Let $X  \subseteq \P^N$ be a smooth irreducible variety of dimension $n \ge 1$. Let $\E$ be a rank $r$ Ulrich vector bundle on $X$. Let $F$ be a scheme-theoretic fiber of $\Phi_{\E}$. Let $Z \subset F$ be a $0$-dimensional subscheme of length $2$ and let $L \subset \P^N$ be the line generated by $Z$. Then $L \subseteq F$. In particular $F$ is a linear space in $\P^N$. 
\end{lemma}
\begin{proof}
By Lemma \ref{span} we have that if $ L \not\subset X$ then $H^0(\E) \to H^0(\E_{|Z})$ is surjective, hence
$$h^0(\I_{Z/X}\otimes \E) = h^0(\E)-2r \le h^0(\E)-r-1$$
and  $Z$ is separated by $\Phi_{\E}$ (see for instance \cite[Prop.~2.4]{ar}), a contradiction. Therefore $L \subseteq X$ and $Z$ is not separated by $(\Phi_{\E})_{|L}$. Thus $(\Phi_{\E})_{|L}$ is constant by Lemma \ref{rette} and then $L \subseteq F$. Since this happens for every two distinct points in $F_{red}$ (if there are) we find that $F_{red}$ is a linear space in $\P^N$. Moreover, by the same reason, $F$ must be smooth, hence $F$ is a linear space in $\P^N$. 
\end{proof}
\section{Proof of the main results}

We first prove Theorem \ref{main1}.


\renewcommand{\proofname}{Proof of Theorem \ref{main1}}

\begin{proof}
Obviously (i) implies (ii). If (ii) holds and $L \subset X$ is a line, then 
\begin{equation}
\label{taut}
\O_{\P(\E_{|L})}(1) = {\O_{\P(\E)}(1)}_{|\P(\E_{|L})}
\end{equation}
is ample, hence $\E_{|L}$ is ample. Thus we get (iii). Now assume (iii) and let $Z \subset \P(\E)$ be a $0$-dimensional subscheme of length $2$. We will prove that 
\begin{equation}
\label{jet}
r_Z : H^0(\O_{\P(\E)}(1)) \to H^0(\O_{\P(\E)}(1)_{|Z}) \ \hbox{is surjective}.
\end{equation}
If $Z \subset \P(\E_x)$ for some $x \in X$ then we have the following commutative diagram
$$\xymatrix{H^0(\O_{\P(\E)}(1)) \ar[d]^f \ar[r]^{\hskip -.3cm r_Z} & H^0(\O_{\P(\E)}(1)_{|Z}) \ar[d]^{\cong} \\ H^0(\O_{\P(\E_x)}(1)) \ar[r]^{\hskip -.3cm r_{Z,x}} & H^0(\O_{\P(\E_x)}(1)_{|Z})}$$
where $f$ is surjective (see Remark \ref{inj}) and $r_{Z,x}$ is surjective because $\O_{\P(\E_x)}(1) \cong \O_{\P^{r-1}}(1)$ is very ample. Therefore $r_Z$ is surjective in this case. 

Hence we can assume that $Z$ is not contained in any fiber of $\pi$ and let $\pi(Z) \subset X$ be the corresponding $0$-dimensional subscheme of length $2$. 
Let $L$ be the line in $\P^N$ generated by $\pi(Z)$. If $L \not\subset X$, which holds in particular if $X$ does not contain lines, then we know by Lemma \ref{span} that $H^0(\E) \to H^0(\E_{|\pi(Z)})$ is surjective, hence so is $r_Z$. If $L \subset X$ it follows by Lemma \ref{rette} that $H^0(\E) \to H^0(\E_{|L})$ is surjective. On the other hand $\E_{|L}$ is ample, hence very ample and therefore $H^0(\E_{|L}) \to H^0(\E_{|\pi(Z)})$ is surjective. But then again $H^0(\E) \to H^0(\E_{|\pi(Z)})$ is surjective, hence so is $r_Z$. Thus (i) is proved.

Next, if (iv) holds, then diagram \eqref{muk} gives that  $|\Im \lambda_{\E}|$ is very ample, hence also (v) holds. Obviously (v) implies (vi). If (vi) holds and $L \subset X$ is a line, then $\det(\E_{|L}) = (\det \E)_{|L}$ is ample, hence $\E_{|L}$ is not trivial. Thus we get (vii). Finally assume (vii).  Let $Z \subset X$ be a $0$-dimensional subscheme of length $2$. We will prove that 
\begin{equation}
\label{jet2}
h_Z : \Im \lambda_{\E} \to H^0((\det \E)_{|Z}) \cong \C^2 \ \hbox{is surjective}.
\end{equation}
This gives that $|\Im \lambda_{\E}|$ is very ample and, together with diagram \eqref{muk}, implies (iv). Let $L$ be the line in $\P^N$ generated by $Z$. If $L \not\subset X$, which holds in particular if $X$ does not contain lines, then we know by Lemma \ref{span} that $g_Z : H^0(\E) \to H^0(\E_{|Z}) \cong \C^{2r}$ is surjective, where $r$ is the rank of $\E$. Consider $Z$ as two possibly infinitely near points $x, x'$. The surjectivity of $g_Z$ implies that we can find, for every $1 \le i \le r$, sections $\sigma_i \in H^0(\E)$ such that $\sigma_i(x)=0$ and $\{\sigma_1(x'),\ldots,\sigma_r(x')\}$ are linearly independent (and similarly swapping $x$ and $x'$). Now this gives that 
$${\sigma_1}_{|Z}(x) \wedge \ldots \wedge {\sigma_r}_{|Z}(x)=0, {\sigma_1}_{|Z}(x') \wedge \ldots \wedge {\sigma_r}_{|Z}(x') \ne 0$$
and similarly swapping $x$ and $x'$. Thus $h_Z$ is surjective.

If $L \subset X$ it follows by Lemma \ref{rette} that $H^0(\E) \to H^0(\E_{|L})$ is surjective and that, being $\E_{|L}$ globally generated, that $\lambda_{\E_{|L}}$ is surjective. This gives the following commutative diagram
$$\xymatrix{\Lambda^r H^0(\E) \ar@{->>}[d] \ar[r]^{\lambda_{\E}} & \Im \lambda_{\E} \ar[d]^{h_L}\ar@{^{(}->}[r] & H^0(\det \E) \ar[dl] \\ \Lambda^r H^0(\E_{|L}) \ar@{->>}[r]^{\hskip -.2cm \lambda_{\E_{|L}}} & H^0(\det(\E_{|L})) & }$$
hence $h_L$ is surjective. Now $\det(\E_{|L})$ is very ample hence 
$$H^0(\det(\E_{|L})) \to H^0(\det(\E_{|L})_{|Z}) = H^0((\det \E)_{|Z})$$
is surjective and composing with $h_L$ we get that $h_Z$ is surjective. Thus \eqref{jet2} is proved and (iv) holds. Alternatively the fact that (iv) holds assuming (vii) can be proved as follows. By a standard separation criterion (see for instance \cite[Prop.~2.4]{ar}), to get that $\Phi_{\E}$ is an embedding one just needs to prove that 
\begin{equation}
\label{stima}
h^0(\I_{Z/X}\otimes \E) \le h^0(\E)-r-1.
\end{equation}
This clearly holds when the line $L$ generated by $Z$ is not contained in $X$, since Lemma \ref{span} gives that $h^0(\I_{Z/X}\otimes \E) = h^0(\E)-2r$. On the other hand, if $L \subset X$, we know by Lemma \ref{rette} that $H^0(\E) \to H^0(\E_{|L})$ is surjective. Therefore $h^0(\I_{Z/X}\otimes \E) = h^0(\E)-\rk \psi$, where $\psi: H^0(\E_{|L}) \to H^0(\E_{|Z})$. Now $\E_{|L}$ is globally generated and non trivial, hence we can write $\E_{|L} \cong \O_{\P^1}(a_1) \oplus \ldots \oplus \O_{\P^1}(a_r)$ with $a_1 \ge \ldots a_r \ge 0$ and $a_1 \ge 1$. Since $\Ker \psi = H^0(\E_{|L}(-Z))$, letting $k$ be the number of $i \in \{1,\ldots,r\}$ such that $a_i \ge 2$, we get 
$$\rk \psi = \sum\limits_{i=1}^r(a_i+1)-\sum\limits_{i=1}^k(a_i-1)=\sum\limits_{i=k+1}^ra_i+r+k \ge r+1$$
hence \eqref{stima} holds.
\end{proof}

\begin{remark}
With the same method of proof of Theorem \ref{main1} one can show that if $\E$ is a rank $r$ ample Ulrich vector bundle on $X \subset \P^N$ then, the Seshadri constants, on any $x \in X$, satisfy:
\begin{itemize}
\item [(i)] $\varepsilon(\E,x) \ge 1$;
\item [(ii)] $\varepsilon(\det \E,x) \ge r$.
\end{itemize}
\end{remark}
The same can be proven using Theorem \ref{main1},  \cite{bss} and \cite{fm}.

Next we prove Theorem \ref{main2}.

\renewcommand{\proofname}{Proof of Theorem \ref{main2}}

\begin{proof}  
It follows by Lemma \ref{fibre} and \eqref{pc} that any scheme-theoretic fiber of $\Phi_{\E}$ is a linear space of dimension at least $\phi(\E)=n-\nu(\det \E)$. Now let $F$ be any scheme-theoretic fiber of $\varphi_{\E}$, so that $\dim F \ge n+r-1-\nu(\E)$. We want to show that the scheme-theoretic image $\pi(F) \subseteq X \subseteq \P^N$ is a linear space. To this end it is enough to show, as done in Lemma \ref{fibre}, that for any $0$-dimensional subscheme $Z \subset \pi(F)$ of length $2$, then the line $L:= \langle Z \rangle$ is contained in $\pi(F)$. By the second part of Remark \ref{inj} there is a $0$-dimensional subscheme $Z' \subset F$ of length $2$ such that $Z = \pi(Z')$. By hypothesis we have that $H^0(\O_{\P(\E)}(1)) \to H^0(\O_{\P(\E)}(1)_{|Z'})$ is not surjective, that is $H^0(\E) \to H^0(\E_{|Z})$ is not surjective. Hence Lemma \ref{span} gives that $L \subset X$. Also Lemma \ref{rette} implies that also $H^0(\E_{|L}) \to H^0(\E_{|Z})$ is not surjective. Thus $\E_{|L}$ must have a maximal trivial direct summand, say $\O_{\P^1}^{\oplus k}$ for some integer $1 \le k \le r$ and $\P(\E_{|L})$ gets mapped by $\varphi_{\E}$ onto a rational normal scroll that is a cone with vertex $V = \P^{k-1} \subset \P^{rd-1}$ when $k \le r-1$ or onto $V= \P^{r-1}$ when $k=r$. It follows that $\varphi_{\E}(F) \subseteq V$. For any $x'' \in L $ we have that $V \subseteq \varphi_{\E}(\pi^{-1}(x''))$, hence there exists $z'' \in \pi^{-1}(x'')$ such that $\varphi_{\E}(z'') = \varphi_{\E}(z)$. Therefore $z'' \in F$ and $x'' = \pi(z'') \in \pi(F)$. 
\end{proof}

\begin{remark}
\label{kgg}
Using Theorem \ref{main2} and \cite[Rmk.~1]{bss} we get that if $\E$ is an Ulrich vector bundle on $X \subseteq \P^N$ and $H$ is a hyperplane divisor, then $K_X+tH$ is globally generated provided that
$$t \ge \max\{n - \nu(\det \E)+1, \nu(\det \E)+1\}.$$
\end{remark}


\begin{remark}  
\label{delp}
Let $X \subset \P^7$ be the Del Pezzo surface of degree $7$, so that $X$ is the blow-up of the plane in two points. Let $E_1, E_2$ be the exceptional divisors and let $\widetilde H$ be the inverse image of a line in the plane. Let $\E=\O_X(H+E_1-E_2) = \O_X(3\widetilde H-2E_2)$. Then $\E$ is an Ulrich line bundle on $X$ by \cite[Prop.~4.1(i)]{b1}. Now $\Phi_{\E}$ is birational onto its image and behaves differently on the three lines contained in $X$: $E_1$ is contracted, $E_2$ maps to a conic and $\widetilde H-E_1-E_2$ maps to a line.
\end{remark}

Now we prove the corollaries.



\renewcommand{\proofname}{Proof of Corollary \ref{cor1}}

\begin{proof}
Obviuously (i) implies (ii) and (iii) implies (iv) by \eqref{muk}. Vice versa assume that $\E$ (respectively $\det \E$) is big. Then $\nu(\E)=n+r-1$ (respectively $\nu(\det \E)=n$). Hence the general fiber of $\varphi_{\E}$ (respectively of $\Phi_{\E}$) is $0$-dimensional, so it is a point by Theorem \ref{main2}. Therefore $\varphi_{\E}$ (respectively $\Phi_{\E}$) is birational onto its image. 
\end{proof}
\renewcommand{\proofname}{Proof of Corollary \ref{cor2}}

\begin{proof}
Set $\nu = \nu(\E)$. Since $\dim \Im \varphi = \nu$ we have that $\dim \varphi^{-1}(y) \ge n+r-\nu-1$ for any $y \in \Im \varphi$.

Let $x \in X$ be a general point and let
$$Z = \{z \in X : P_z \cap P_x \ne \emptyset\}.$$
We claim that $\dim Z \ge n+r-\nu$.

To see this consider the incidence correspondence
$$\I = \{ (y, z) \in P_x \times X : y \in P_z \} \subset P_x \times X$$
together with its projections 
\[ \xymatrix{& \I \ \ \ar[dl]_{\pi_1} \ar[dr]^{\pi_2} & \\ \ \ \ \ \ P_x & & \ X \ \ \ .} \]
Let $y \in P_x$. Then $(y, x) \in \I$, hence $\pi_1$ is surjective. Next observe that $\dim \pi_1^{-1}(y) \ge n+r-\nu-1$. 

In fact we know that $\dim \varphi^{-1}(y) \ge n+r-\nu-1$. Since $\pi$ is injective on $\varphi^{-1}(y)$ by Remark \ref{inj}, we get that $\dim \pi(\varphi^{-1}(y)) \ge n+r-\nu-1$. Also 
$$\pi(\varphi^{-1}(y)) \subseteq \{z \in X : y \in P_z\} \cong \pi_1^{-1}(y):$$ 
if $z \in \pi(\varphi^{-1}(y))$ then there is $u \in \varphi^{-1}(y)$ such that $z = \pi(u)$. But $u \in \P(\E_z)$, hence $y = \varphi(u) \in P_z$.

Therefore $\dim \pi_1^{-1}(y) \ge n+r-\nu-1$ and then $\dim \I \ge n+2r-\nu-2$. Pick an irreducible component $\I_1 \subseteq \I$ such that $\dim \I_1 \ge n+2r-\nu-2$. Let $z \in \pi_2(\I_1)$. Then $\pi_2^{-1}(z) \cong P_x \cap P_z$ hence $\dim \pi_2^{-1}(z) \le r-1$, therefore $\pi_2(\I_1)$ is irreducible and $\dim \pi_2(\I_1) \ge n+r-\nu-1 \ge 1$ because $\E$ is not big. Now let $z \in \pi_2(\I_1)$ be general. Then $z \ne x$. If $\dim \pi_2^{-1}(z) = r-1$ then $P_x=P_z$ hence $\Phi(x)=\Phi(z)$. But $x \in X$ is a general point and then Theorem \ref{main2} implies that $x=z$, a contradiction. Hence $\dim \pi_2^{-1}(z) \le r-2$ and this gives that $\dim \pi_2(\I_1) \ge n+r-\nu$. On the other hand  $\pi_2(\I_1) \subseteq Z$, so that $\dim Z \ge n+r-\nu$.

Given any $z \in Z, z \ne x$ it follows by Theorem \ref{main2} that $\pi(\varphi^{-1}(y)) \subset X$ is a linear subspace of dimension at least $n+r-\nu-1$ in $\P^N$ for any $y \in P_z\cap P_x$. Moreover 
$$Z = \bigcup\limits_{z \in Z \setminus \{x\}} \bigcup\limits_{y \in P_z\cap P_x} \pi(\varphi^{-1}(y)).$$ 
As $\dim Z \ge n+r-\nu$ we find that through a general point of $X$ there are infinitely many $(n+r-1-\nu)$-dimensional linear subspaces contained in $X$. Since this is a closed condition, the same holds for any $x\in X$.
\end{proof}

\renewcommand{\proofname}{Proof of Corollary \ref{cor3}}

\begin{proof}
As above we set $\Phi=\Phi_{\E}, \varphi=\varphi_{\E}, \phi=\phi(\E)$ and $\nu=\nu(\E)$. Note that $c_1(\E)^n=0$ is equivalent to $\phi \ge 1$ by \eqref{pc}. Let $F$ be a general fiber of  $\Phi$, so that we know by Theorem \ref{main2} that $F=\P^{\phi} \subseteq \P^N$ and that $X$ is covered by a family of linear $\P^{\phi}$'s. Let $\Phi(F)=[P] \in {\mathbb G}(r-1,rd-1)$. For a general $y \in \Im \varphi$ set
$$W_y = \{x \in X : y \in P_x\}.$$
Note that $\pi_{|\varphi^{-1}(y)}: \varphi^{-1}(y) \to W_y$ is an isomorphism by Remark \ref{inj}, hence
$$\dim W_y = \dim \varphi^{-1}(y) = n+r-1-\nu.$$
We will now prove that $F = W_y$ for any $y \in P$. This gives the corollary.

First $F \subseteq W_y$. In fact if $x \in F$ then $[P] = \Phi(F)=\Phi(x)=[P_x]$ so that $y \in P = P_x$, hence $x \in W_y$.

Now assume that there exists $x \in W_y \setminus F$. For any $x' \in F$ we have that $\langle x,x' \rangle \subset X$. In fact if $\langle x,x' \rangle$ is not contained in $X$, then Lemma \ref{span} implies that $P_x \cap P_{x'} = \emptyset$. On the other hand $[P] = \Phi(F)=\Phi(x')=[P_{x'}]$, hence $y \in P=P_{x'}$. But $x \in W_y$, hence $y \in P_x$, a contradiction.

Therefore $\P^{\phi+1} = \langle x, F \rangle \subset X$. Now the restriction of $\Phi$ to $\P^{\phi+1} = \langle x, F \rangle$ is a morphism that contracts a hyperplane $F=\P^\phi$ to a point, hence it is constant. But this gives the contradiction $x \in F$.

Therefore $F = W_y$ and the corollary is proved.
\end{proof}

%
%

\renewcommand{\proofname}{Proof of Corollary \ref{cor4}}
\begin{proof}
If $(X,\O_X(1), \E)$ is as in (i) or (ii) it follows by \cite[Lemma 4.1]{lo} that $\E$ is Ulrich and obviously $c_1(\E)^{\lfloor \frac{n}{2} \rfloor +1} =0$.

Assume now that $\E$ is Ulrich with $c_1(\E)^{\lfloor \frac{n}{2} \rfloor +1} =0$. 

%
%


Since $\lfloor \frac{n}{2} \rfloor +1 \le n$ we have that $(\det \E)^n = 0$. If $\rho(X)=1$, then $\det\E \cong \O_X$  and this gives that $\E \cong \O_X^{\oplus r}$. Hence $rd=h^0(\E)=r$ and we are in case (i). 

Therefore we may assume that $\rho(X)\geq 2$.

Since $\nu(\det \E) \le \frac{n}{2}$, we deduce from \eqref{pc} that $\phi(\E) \ge \frac{n}{2}$. By Theorem \ref{main2} we know that $X$ is covered by a family of $\P^{\phi(\E)}$'s, hence \cite[Main Thm.]{sa2} implies that there is a smooth irreducible projective variety $B$ of dimension $b \ge 1$, a very ample rank $n-b+1$ vector bundle $\F$ on $B$ such that $(X, \O_X(1)) \cong (\P(\F), \O_{\P(\F)}(1))$. Let $p : \P(\F) \to B$ be the projection. Then \cite[Main Thm.]{sa2} gives that there is a nonempty open subset $U$ of $X$ such that 
$$\P^{\phi(\E)} = \Phi^{-1}(\Phi(u)) \subseteq p^{-1}(p(u))= \P^{n-b} \ \hbox{for every} \ u \in U.$$ 
In particular we have that $\phi(\E) \le n-b$, hence $b \le n-\phi(\E) \le \frac{n}{2}$. 

We actually claim that all fibers of $\Phi$ and $p$ coincide. 

Let $x \in X$. Then $p^{-1}(p(x))=\P^{n-b}$ and let us show that $\Phi(p^{-1}(p(x)))$ is a point. In fact, if not, then 
$$\frac{n}{2} \le \phi(\E) \le n-b = \dim \Phi(p^{-1}(p(x))) \le \dim \Phi(X) = n-\phi(\E) \le \frac{n}{2}$$ 
hence $\Phi(p^{-1}(p(x)))=\Phi(X)$. Now for every $u \in U$ there exists $x' \in p^{-1}(p(x))$ such that $\Phi(x')=\Phi(u)$, so that $x' \in \Phi^{-1}(\Phi(u)) \subseteq p^{-1}(p(u))$. This gives that $p(u)=p(x')=p(x)$, hence $u \in p^{-1}(p(x))$. But then we get the contradiction $U \subset p^{-1}(p(x))$.

Therefore $\Phi(p^{-1}(p(x)))$ is a point and 
$$p^{-1}(p(x)) \subseteq \Phi^{-1}(\Phi(x))$$
holds for every $x \in X$. Let us see that they are actually equal. Suppose that there exists $x' \in\Phi^{-1}(\Phi(x)) \setminus p^{-1}(p(x))$. Now Theorem \ref{main2} gives that $\Phi^{-1}(\Phi(x))=\P^s$ for some $s \le n-1$. Moreover $p^{-1}(p(x')) \subseteq \Phi^{-1}(\Phi(x'))=\Phi^{-1}(\Phi(x))$, hence we have a $\P^s$ containing two disjoint $\P^{n-b}$'s, namely $p^{-1}(p(x'))$ and $p^{-1}(p(x))$. But this is a contradiction since $b \le \frac{n}{2}$.

Thus $p^{-1}(p(x)) = \Phi^{-1}(\Phi(x))$ for every $x \in X$. 

Consider the Stein factorization 
$$\xymatrix{X \ar[dr]_{\Phi} \ar[r]^{\widetilde \Phi} & B_1 \ar[d]^g \\ & \Phi(X)}$$
so that $\widetilde \Phi_*\O_X \cong \O_{B_1}$ and $g$ is finite. Since the fibers of $\Phi$ are connected by Theorem \ref{main2}, it follows that $g$ is bijective and therefore $p^{-1}(p(x)) = \Phi^{-1}(\Phi(x)) = \widetilde \Phi^{-1}(\widetilde \Phi(x))$ for every $x \in X$. Hence we can apply \cite[Lemma 1.15(b)]{de} and deduce that $\widetilde \Phi$ factorizes through $p$ and that $p$ factorizes through $\widetilde \Phi$. Since they have the same fibers it follows that $B \cong B_1$. 


Now we know that $\E = \Phi^*\U = \widetilde \Phi^*(g^*\U)$, where $\U$ is the tautological bundle on $\mathbb G(r-1, \P H^0(\E))$. Thus there is a rank $r$ vector bundle $\H$ on $B$ such that $\E \cong p^*\H$. Setting $\G = \H(-\det \F)$ and using \cite[Lemma 4.1]{lo} we deduce that $(X,\O_X(1), \E)$ is as in (ii).
\end{proof}
\begin{remark}
\label{sha}
Let $Y$ be a smooth irreducible variety of dimension $m$, let $L$ be a very ample line bundle on $Y$. Let $X$ be a hyperplane section of the Segre embedding of $\P^{m-1} \times Y$. As shown in \cite[Ex.~14.1.5]{bs} the restriction of the second projection $p : X \to Y$ exhibits $(X,H)$ as a scroll over $Y$ with general fiber a linear $\P^{m-2}$, but having some fibers equal to a linear $\P^{m-1}$. Hence $X$ is of dimension $n=2m-2$ and, if $(Y,L)$ is not covered by lines, $X$ is not a $\P^{n-b}$-bundle over a variety of dimension $b$. On the other hand, choosing an Ulrich vector bundle $\G$ for $(Y,L)$ and setting $\E = p^*(\G((m-1))L)$ we get by \cite[Lemma 4.1]{lo} an Ulrich vector bundle on $X$ such that $c_1(\E)^{\lfloor \frac{n}{2} \rfloor+1} \ne 0$ and $c_1(\E)^{\lfloor \frac{n}{2} \rfloor+2} = 0$.
\end{remark}

\renewcommand{\proofname}{Proof of Corollary \ref{cor5}}
\begin{proof}
If $(X,\O_X(1), \E)$ is as in (i) or (ii) it follows by \cite[Lemma 4.1]{lo} that $\E$ is Ulrich and obviously $c_1(\E)^{n-b+1} =0$.

Vice versa assume that $\E$ is Ulrich with $c_1(\E)^{n-b+1} =0$.


If $n=b$ then $c_1(\E)=0$, so $\Phi_{\E}$ is a constant map and $\E \cong \O_X^{\oplus r}$. Hence $rd=h^0(\E)=r$ and we get (i). From now on assume that $n \ge b+1$.

By hypothesis we have that that $\nu(\det \E) \le n-b$, hence \eqref{pc} gives that
$$\phi(\E) \ge b.$$
Set $\Phi = \Phi_{\E}$ and, for any $x \in X$, set $f_x = p^{-1}(p(x))$ and $F_x = \Phi^{-1}(\Phi(x))$. Hence $f_x$ is a linear subspace of $\P^N$ of dimension $n-b$, while $F_x$ is a linear subspace of $\P^N$ by Theorem \ref{main2} with $\dim F_x \ge \phi(\E)$  and equality holds for a general $x \in X$. Moreover $x \in f_x \cap F_x$.

\begin{claim}
\label{dicot}
Either $f_x \cap F_x = \{x\}$ or $f_x = F_x$.
\end{claim}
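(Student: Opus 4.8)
The plan is to decide the dichotomy according to how $\det\E$ restricts to the fiber $f_x\cong\P^{n-b}$. Since $n\ge b+1$ we have $n-b\ge 1$, and as $f_x$ is linearly embedded with $H_{|f_x}=\O_{\P^{n-b}}(1)$, the globally generated line bundle $\det\E$ restricts to $f_x$ as $\O_{\P^{n-b}}(c)$ for a well-defined integer $c\ge 0$ (using $\Pic(\P^{n-b})\cong\Z$). I would treat the two cases $c\ge 1$ and $c=0$ separately and show that they give respectively $f_x\cap F_x=\{x\}$ and $f_x=F_x$.

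First, suppose $c\ge 1$. For distinct $y,y'\in f_x$ the line $L=\langle y,y'\rangle$ lies in $f_x$ because $f_x$ is linear, and $\det(\E_{|L})=\O_{\P^1}(c)$ with $c\ge 1$ forces $\E_{|L}$ to be nontrivial. Lemma \ref{rette} then gives that $(\Phi_\E)_{|L}$ is an isomorphism onto its image, so $\Phi_\E(y)\ne\Phi_\E(y')$. Taking $y=x$ shows that no point of $f_x$ other than $x$ can lie in $F_x=\Phi_\E^{-1}(\Phi_\E(x))$, i.e. $f_x\cap F_x=\{x\}$.

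Next, suppose $c=0$. Then every line $L\subseteq f_x$ has $\E_{|L}$ trivial, so $(\Phi_\E)_{|L}$ is constant by Lemma \ref{rette}; as $f_x$ is covered by lines through $x$, the map $(\Phi_\E)_{|f_x}$ is constant and hence $f_x\subseteq F_x$. It remains to promote this to an equality. By Theorem \ref{main2}, $F_x$ is a linear space $\P^s\subseteq\P^N$ containing $f_x\cong\P^{n-b}$ as a linear subspace. Fix an ample line bundle $A$ on $B$ and write $(p_{|F_x})^*A=(p^*A)_{|F_x}=\O_{\P^s}(t)$; restricting to $f_x$, which $p$ contracts to the point $p(x)$, gives $\O_{\P^{n-b}}(t)=\O_{\P^{n-b}}$, so $t=0$ and $(p_{|F_x})^*A=\O_{\P^s}$. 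If $p_{|F_x}$ were nonconstant its image would contain a curve on which $A$ has positive degree, contradicting the triviality of the pullback; hence $p(F_x)$ is a point, $F_x\subseteq f_x$, and therefore $f_x=F_x$.

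I expect the second case to be the only real obstacle: obtaining $f_x\subseteq F_x$ from Lemma \ref{rette} is routine, but upgrading to equality genuinely uses that $F_x$ is an honest linear space (Theorem \ref{main2}) together with the fibration $p$. The ample-pullback step is what forces $p$ to be constant on $F_x$ and closes the gap; the facts to verify carefully are only the harmless linear-algebra statements that a degree-$t$ line bundle on $\P^s$ has degree $t$ on every linear $\P^{n-b}\subseteq\P^s$ and that $p^*A$ is trivial along the contracted fiber $f_x$.
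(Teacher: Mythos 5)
Your proof is correct, but it takes a genuinely different route from the paper's. The paper argues symmetrically and by a single dichotomy on the intersection: if $f_x \cap F_x \ne \{x\}$, then (both fibers being linear spaces, by Theorem \ref{main2}) the intersection contains a line; since $p_{|F_x}$ and $\Phi_{|f_x}$ are morphisms from projective spaces contracting that line, and any morphism from a projective space is either constant or finite, both are constant, giving $F_x \subseteq f_x$ and $f_x \subseteq F_x$ at once. You instead split according to the invariant $c = \deg\bigl((\det \E)_{|f_x}\bigr)$ and bring in Lemma \ref{rette}: for $c \ge 1$ every line in $f_x$ has nontrivial restriction of $\E$, so $\Phi_{|f_x}$ is injective and the intersection is $\{x\}$; for $c = 0$ all lines in $f_x$ are contracted, giving $f_x \subseteq F_x$, and your ample-pullback computation (which is precisely the standard proof of the ``constant or finite'' fact the paper invokes) yields the reverse inclusion. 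What your route buys: the dichotomy is detected by an explicit numerical invariant, and in the first case you obtain injectivity of $\Phi_{|f_x}$ directly --- a fact the paper only derives later, inside Claim \ref{dicot2}, by applying Claim \ref{dicot} at the points of $F_x$. What the paper's route buys: brevity and symmetry, using no Ulrich-specific input beyond the linearity of the fibers. One citation caveat: in the case $c=0$ you need ``$\E_{|L}$ trivial implies $(\Phi_{\E})_{|L}$ constant,'' which is the converse of what the statement of Lemma \ref{rette} literally asserts; it is established inside that lemma's proof (and is immediate anyway, since the evaluation maps of a trivial bundle have constant kernel), so this is a matter of precise attribution, not a gap.
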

\renewcommand{\proofname}{Proof}
\begin{proof}
Assume that $f_x \cap F_x \ne \{x\}$, hence $f_x \cap F_x$ is a positive dimensional linear subspace of $\P^N$. Now $p_{|F_x} : F_x \to B$ contracts $f_x \cap F_x$ to a point. Since any morphism from a projective space is either constant or finite, we get that $p(F_x)$ is a point and therefore $F_x \subseteq f_x$. Similarly $\Phi_{|f_x}: f_x \to \Phi(X)$ contracts $f_x \cap F_x$ to a point, hence, as above, $\Phi(f_x)$ is a point and therefore $f_x \subseteq F_x$. This proves the Claim.
\end{proof}
Morover we have
\begin{claim}
\label{dicot2}
If $f_x \ne F_x$, then $\phi(\E)=b, F_x = \P^b \cong B$. Also $\Phi_{|f_x}: f_x \to \Phi(X)$ is an embedding.
\end{claim}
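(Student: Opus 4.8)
The plan is to play the two fibration structures on $X$ against each other: the linear projection $p$ with fibers $f_x \cong \P^{n-b}$, and the map $\Phi$ with fibers $F_x$, using the dichotomy of Claim \ref{dicot}. Throughout I assume $f_x \ne F_x$, so that $f_x \cap F_x = \{x\}$ by Claim \ref{dicot}. The first and most important step I would record is a \emph{propagation} of this dichotomy. For every $y \in f_x$ one has $f_y = f_x$; if moreover $f_y = F_y$, then $f_x = F_y$ contains $x$, so $F_x = F_y = f_x$, contradicting $f_x \ne F_x$. Hence $f_y \ne F_y$, and Claim \ref{dicot} gives $f_x \cap F_y = f_y \cap F_y = \{y\}$. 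The symmetric computation shows that for every $z \in F_x$ one has $F_z = F_x$ and $f_z \ne F_z$, whence $f_z \cap F_x = \{z\}$. These two statements are precisely the set-theoretic injectivity of $\Phi_{|f_x}$ and of $p_{|F_x}$, and establishing them is the crux of the whole claim.

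Next I would pin down the dimension and identify $B$. The morphism $p_{|F_x}: F_x \to B$ is not constant, since otherwise $F_x \subseteq f_x$ would make $f_x \cap F_x = F_x$ positive-dimensional (recall $\dim F_x \ge \phi(\E) \ge b \ge 1$), against $f_x \cap F_x = \{x\}$. As $F_x \cong \P^{\dim F_x}$ by Theorem \ref{main2}, and a non-constant morphism out of a projective space is finite, we get $\dim F_x = \dim p(F_x) \le b$; combined with $\dim F_x \ge \phi(\E) \ge b$ this forces $\dim F_x = \phi(\E) = b$ and $p(F_x) = B$. By the injectivity recorded above, $p_{|F_x}$ is a finite bijective morphism onto the smooth variety $B$, hence, in characteristic zero, an isomorphism; thus $B \cong F_x \cong \P^b$.

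Finally I would upgrade the injective finite map $\Phi_{|f_x}: f_x \cong \P^{n-b} \to \Phi(X)$ to an embedding; note it is non-constant, as otherwise $f_x \subseteq F_x$ with $\dim f_x = n-b \ge 1$. The key observation is that for any line $L \subseteq f_x$ the bundle $\E_{|L}$ is non-trivial: otherwise Lemma \ref{rette} would make $\Phi_{|L}$ constant, so $L$ would lie in a fiber $F_y$ with $y \in L$, giving $L \subseteq f_x \cap F_y = \{y\}$, absurd. Lemma \ref{rette} then yields that $\Phi_{|L}$ is an isomorphism onto its image, so the differential of $\Phi_{|f_x}$ is injective on $T_y L$ for every point $y$ and every line $L \ni y$; since every tangent direction of $\P^{n-b}$ is tangent to a line, $\Phi_{|f_x}$ is an immersion. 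An injective proper immersion is a closed immersion, and composing with the Pl\"ucker embedding of the Grassmannian shows $\Phi_{|f_x}$ is an embedding onto a subvariety of $\Phi(X)$. The main obstacle is the injectivity step of the first paragraph: once the dichotomy of Claim \ref{dicot} is correctly propagated from $x$ to all of $f_x$ and $F_x$, the dimension count, the identification $B \cong \P^b$, and the immersion argument via lines all follow smoothly.
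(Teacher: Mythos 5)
Your proposal is correct and follows essentially the same route as the paper: the same propagation of Claim \ref{dicot} from $x$ to all points of $f_x$ and $F_x$ (yielding injectivity of $\Phi_{|f_x}$ and $p_{|F_x}$), the same dimension count $b \le \phi(\E) \le \dim F_x \le b$ using finiteness of non-constant morphisms out of projective space, and the identification $B \cong F_x \cong \P^b$ via the finite bijective map $p_{|F_x}$. The only cosmetic difference is the final step: where the paper upgrades injectivity of $\Phi_{|f_x}$ to an embedding by citing the separation argument in the proof of Theorem \ref{main1}, you argue it directly through Lemma \ref{rette}, showing every line $L \subseteq f_x$ has $\E_{|L}$ non-trivial and hence the differential is injective along all line directions --- both versions rest on exactly the same dichotomy for $\E_{|L}$.
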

\renewcommand{\proofname}{Proof}
\begin{proof}
Let $z \in F_x$, so that $F_z=F_x$. If $f_z \cap F_z \ne \{z\}$ then $F_x = F_z = f_z$ by Claim \ref{dicot}. Hence $x \in f_z$ and therefore $f_x = f_z = F_x$, a contradiction. Therefore $\{z\} = f_z \cap F_z = f_z \cap F_x$, hence $p_{|F_x}: F_x \to B$ is injective. The injectivity of $\Phi_{|f_x}: f_x \to \Phi(X)$ is proved in the same way. As in the proof of Theorem \ref{main1}, if $\Phi_{|f_x}$ were not an embedding, it would have a positive dimensional fiber, a contradiction. Moreover
$$b \le \phi(\E) \le \dim F_x \le \dim B = b$$
so that $\phi(\E)=b, F_x = \P^b$ by Theorem \ref{main2} and $B \cong \P^b$. This proves the Claim.
\end{proof}
Consider the Stein factorization 
$$\xymatrix{X \ar[dr]_{\Phi} \ar[r]^{\widetilde \Phi} & B_1 \ar[d]^g \\ & \Phi(X)}$$
and set $\widetilde F_x = \widetilde \Phi^{-1}(\widetilde \Phi(x))$. As in the proof of Corollary \ref{cor4} we see that $F_x = \widetilde F_x$ for every $x \in X$.

We divide the proof in two cases.

{\bf Case 1:} There is a point $x \in X$ such that $f_x = F_x$. 


We show that $f_{x'}=F_{x'}$ for every $x'\in X$. 

Assume to the contrary that there is $x' \in X, x' \ne x$ such that $f_{x'} \ne F_{x'}$. Then $f_{x'} \cap F_{x'}=\{x'\}$ by Claim \ref{dicot} and $\dim F_{x'}=b$ by Claim \ref{dicot2}. Therefore $f_x \cdot  F_{x'} = f_{x'} \cdot F_{x'}=1$, hence $F_x \cap F_{x'} = f_x \cap F_{x'}\neq\emptyset$ and we deduce that $F_x=F_{x'}$. But then
$$f_x\cap f_{x'}=F_{x'}\cap f_{x'}=\{x'\}$$ 
hence $f_x = f_{x'}$, giving the contradiction $\P^{n-b}=f_x = \{x'\}$. 

Therefore $f_{x'}=F_{x'}$ for every $x'\in X$ and this gives that $\phi(\E) =n-b$ and $b \le \frac{n}{2}$ since $\phi(\E) \ge b$. Now, precisely as in the proof of Corollary \ref{cor4}, we can apply \cite[Lemma 1.15(b)]{de} and deduce that we are in case (ii).

This concludes Case 1.
 
{\bf Case 2:}  $f_x \ne F_x$ for every $x \in X$. 

Then Claim \ref{dicot2} gives that $\phi(\E) = b$ and, for every $x \in X$, $F_x = \P^b \cong B$. Also Claim \ref{dicot2} gives that $\Phi_{|f_x}: f_x \to \Phi(X)$ is an embedding and since $\dim \Phi(X) =  n-b$ we get that $\Phi(X) \cong \P^{n-b}$. 


Since $F_x$ is a linear space of dimension $b$ for every $x\in X$, we get that $\Phi:X\to\P^{n-b}$ is a linear $\P^b$-bundle.
Now $f_x \ne F_x$ for every $x \in X$, hence the two linear bundle structures given by $p : X \to B \cong \P^b$ and $\Phi : X \to \Phi(X) \cong \P^{n-b}$ are different, that is that there is no isomorphism $h : \Phi(X) \to B$ such that $p= h \circ \Phi$. Then it follows by \cite[Thm.~A]{sa1} that $(X, H) \cong (\P^b \times \P^{n-b}, \O_{\P^b}(1) \boxtimes \O_{\P^{n-b}}(1))$. Since $\E = \Phi^*\U$ where $\U$ is the tautological bundle on $\mathbb G(r-1, \P H^0(\E))$, \cite[Lemma 4.1]{lo} together with \cite[Prop.~2.1]{es} (or \cite[Thm.~2.3]{b1}) give that $\E \cong q^*(\O_{\P^{n-b}}(b))^{\oplus r}$, where $q$ is the second projection. Then we are in case (i).
This concludes Case 2.
\end{proof}
\renewcommand{\proofname}{Proof}

With the same methods we can also classify Ulrich vector bundles with non big determinant on the Segre product. 


\begin{cor}
\label{prodotto}
Let $\E$ be a rank $r$ vector bundle on $\P^b \times \P^{n-b} \subset \P^N$ embedded with $\O_{\P^b}(1) \boxtimes \O_{\P^{n-b}}(1)$. Then $\E$ is Ulrich with $\det\E$ not big if and only if $\E$ is either $p^*(\O_{\P^{b}}(n-b)^{\oplus r})$ or $q^*(\O_{\P^{n-b}}(b)^{\oplus r})$ where $p$ and $q$ are the projections.
\end{cor}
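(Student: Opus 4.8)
The plan is to deduce the statement from Corollary \ref{cor5}, applied to each of the two projective-bundle structures carried by $X=\P^b\times\P^{n-b}$; I assume $1\le b\le n-1$, the remaining cases being degenerate. Since $\rho(X)=2$ with $\Pic X$ generated by $p^*\O_{\P^b}(1)$ and $q^*\O_{\P^{n-b}}(1)$, and since the Ulrich bundle $\E$ is globally generated so that $\det\E$ is nef, I write $\det\E\cong\O_{\P^b}(a)\boxtimes\O_{\P^{n-b}}(c)$ with $a,c\ge 0$. Expanding the top self-intersection and using $(p^*\O_{\P^b}(1))^{b+1}=0=(q^*\O_{\P^{n-b}}(1))^{n-b+1}$ yields $c_1(\E)^n=\binom{n}{b}a^b c^{n-b}$, so that $\det\E$ is not big if and only if $a=0$ or $c=0$.

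By the symmetry of $X$ under interchanging the two factors (and $b\leftrightarrow n-b$) it suffices to treat the case $a=0$. Then $\nu(\det\E)=\nu(\O_{\P^b}(0)\boxtimes\O_{\P^{n-b}}(c))\le n-b$, hence $c_1(\E)^{n-b+1}=0$. Viewing $X=\P(\O_{\P^b}(1)^{\oplus(n-b+1)})$ as a linear $\P^{n-b}$-bundle over $B=\P^b$ with tautological bundle $H=\O_{\P^b}(1)\boxtimes\O_{\P^{n-b}}(1)$, Corollary \ref{cor5} applies and forces $(X,H,\E)$ into type (i) or (ii) there. Type (ii) is excluded at once: there $\E\cong p^*(\G(\det\F))$ is pulled back from the base, so $\det\E\cong\O_{\P^b}(a')\boxtimes\O_{\P^{n-b}}(0)$, while the condition $c_1(\G(\det\F))^b\ne 0$ forces $a'\ne 0$, contradicting $a=0$. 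Thus we land in type (i), namely $\E\cong q^*(\O_{\P^{n-b}}(b))^{\oplus r}$.

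Running the identical argument for the second structure $X=\P(\O_{\P^{n-b}}(1)^{\oplus(b+1)})\to\P^{n-b}$ handles the case $c=0$ (now $\nu(\det\E)=b$, so $c_1(\E)^{b+1}=0$) and produces $\E\cong p^*(\O_{\P^b}(n-b))^{\oplus r}$. The two outcomes have determinants $\O_{\P^b}(0)\boxtimes\O_{\P^{n-b}}(rb)$ and $\O_{\P^b}(r(n-b))\boxtimes\O_{\P^{n-b}}(0)$, so the branches $a=0$ and $c=0$ are mutually exclusive and jointly exhaust the non-big case. For the converse direction it suffices, by \cite[Lemma 4.1]{lo}, to note that both bundles are Ulrich and that their determinants are manifestly not big. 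The one point needing care is the gap between the present hypothesis $c_1(\E)^n=0$ and the stronger-looking $c_1(\E)^{n-b+1}=0$ of Corollary \ref{cor5}; it disappears once one observes that, because $\rho(X)=2$ and $\det\E$ is nef, non-bigness already forces one of the two degrees of $\det\E$ to vanish, which automatically lowers $\nu(\det\E)$ to the value ($n-b$ or $b$) required to invoke Corollary \ref{cor5} for the appropriate bundle structure.
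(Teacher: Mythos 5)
Your proposal is correct, but it finishes differently from both of the paper's arguments, so a comparison is in order. The paper actually gives two proofs. Its main proof is geometric: by Theorem \ref{main2} every fiber $F_x$ of $\Phi_{\E}$ is a linear subspace of $\P^N$ contained in $X$, every linear subspace of the Segre product lies in a fiber of $p$ or of $q$, and an argument as in Claim \ref{dicot} then forces all fibers of $\Phi_{\E}$ to coincide with the fibers of one fixed projection, after which the conclusion is extracted as in the proof of Corollary \ref{cor5}. Its alternative proof opens exactly as yours does --- writing $\det\E=aL+cM$ and computing $0=(aL+cM)^n=\binom{n}{b}a^bc^{n-b}$, so $a=0$ or $c=0$ --- but then concludes by citing the external results \cite[Lemmas 5.1 and 4.1]{lo}. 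Your route shares that intersection-theoretic reduction but replaces the external citation by an application of the paper's own Corollary \ref{cor5} to each of the two linear $\P^k$-bundle structures on $\P^b\times\P^{n-b}$: you correctly verify the hypothesis $c_1(\E)^{n-b+1}=0$ (once one ruling coefficient vanishes, $\nu(\det\E)$ drops to the required value, which is precisely the gap you flag and close at the end), and you correctly rule out type (ii) of Corollary \ref{cor5} using the condition $c_1(\G(\det\F))^b\neq 0$, since a bundle of type (ii) has determinant pulled back from the base. This is non-circular (Corollary \ref{prodotto} is never used in the proof of Corollary \ref{cor5}) and buys a proof that is self-contained within the paper, at the cost of invoking the full classification of Corollary \ref{cor5} as a black box; the paper's geometric proof instead exhibits directly how the linearity of the fibers of $\Phi_{\E}$ interacts with the Segre geometry, which is the method the authors want to illustrate, while its alternative proof is shorter than yours but leans on \cite{lo}. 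Your implicit restriction to $1\le b\le n-1$ is harmless, as the excluded cases reduce to $X=\P^n$.
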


\begin{proof} 
If $\E$ is either $p^*(\O_{\P^{b}}(n-b)^{\oplus r})$ or $q^*(\O_{\P^{n-b}}(b)^{\oplus r})$ then it is Ulrich by  \cite[Lemma 4.1]{lo} and clearly $\det\E$ is not big.

Vice versa assume that $\E$ is Ulrich with $\det\E$ not big. 

Set $\Phi = \Phi_{\E}$. For any $x\in X=\P^b\times\P^{n-b}$, let $f_x=p^{-1}(p(x))$, $g_x=q^{-1}(q(x))$ and $F_x=\Phi^{-1}(\Phi(x))$. We know by Theorem \ref{main2} that $F_x$ is a linear subspace contained in $X$ and passing through $x$. But it is well known that, on the Segre embedding, any linear subspace must be contained in a fiber. Then either $F_x\subseteq f_x$ or $F_x\subseteq g_x$. Assume for example that $F_x\subseteq f_x$ for some $x\in X$. Since $\det\E$ is not big we get $\dim F_x\geq 1$ by \eqref{pc}. Moreover $\Phi(F_x)=\Phi(x)$, hence, as in the proof of Claim \ref{dicot}, $\Phi(f_x)=\Phi(x)$ and therefore $F_x=f_x$. We prove now that $F_{x'}=f_{x'}$ for any $x'\in X$. In fact, otherwise, $F_{x'}=g_{x'}$ for some $x\neq x'\in X$ and we get that $F_x \cap F_{x'} = f_x \cap g_{x'} = \{(p(x),q(x'))\}$. But then $F_x = F_{x'}$ giving the contradiction $F_x = \{(p(x),q(x'))\}$.

Then we get $\E=q^*(\O_{\P^{n-b}}(b)^{\oplus r})$ as in the proof of Corollary \ref{cor5}. The same works if $F_x\subseteq g_x$ for some $x\in X$.

Alternatively we can prove the corollary as follows. 

Set $L = p^*\O_{\P^b}(1)$ and $M= q^*\O_{\P^{n-b}}(1)$. Then we can write $\det \E = aL+cM$ for some $a, c \in \ZZ$ and
$$0 = (aL+cM)^n=\binom{n}{b}a^bc^{n-b}$$
implies that either $a=0$ or $c=0$. It follows by \cite[Lemmas 5.1 and 4.1]{lo} that $\E$ is as claimed.
\end{proof}


\begin{thebibliography}{CMRPL}

\bibitem[A]{ar} E.~Arrondo.
\textit{Subvarieties of Grassmannians}. 
Notes available at http://www.mat.ucm.es/$\sim$arrondo/trento.pdf


\bibitem[Be]{b1} A.~Beauville.
\textit{An introduction to Ulrich bundles}. 
Eur. J. Math. \textbf{4} (2018), no. 1, 26-36.


\bibitem[Br]{br} D.~Brotbek.
\textit{Hyperbolicity related problems for complete intersection varieties}. 
Compos. Math. \textbf{150} (2014), no. 3, 369-395.

\bibitem[BS]{bs} M.~C.~Beltrametti, A.~J.~Sommese.
\textit{The adjunction theory of complex projective varieties}. 
De Gruyter Expositions in Mathematics, \textbf{16}. Walter de Gruyter \& Co., Berlin, 1995.

\bibitem[BSS]{bss} M.~C.~Beltrametti, M.~Schneider, A.~J.~Sommese.
\textit{Chern inequalities and spannedness of adjoint bundles}.
Proceedings of the Hirzebruch 65 Conference on Algebraic Geometry (Ramat Gan, 1993), 97-107,
Israel Math. Conf. Proc., \textbf{9}, Bar-Ilan Univ., Ramat Gan, 1996. 

\bibitem[CMRPL]{cmp} L.~Costa, R.~M.~Mir\'o-Roig, J.~Pons-Llopis.
\textit{Ulrich bundles}.
De Gruyter Studies in Mathematics, \textbf{77}, De Gruyter 2021. 

\bibitem[D]{de} O.~Debarre.
\textit{Higher-dimensional algebraic geometry}. 
Universitext. Springer-Verlag, New York, 2001. xiv+233 pp.

\bibitem[ES]{es} D.~Eisenbud, F.-O.~Schreyer.
\textit{Resultants and Chow forms via exterior syzygies. With an appendix by Jerzy Weyman}. 
J. Amer. Math. Soc. \textbf{16} (2003), no. 3, 537-579.

\bibitem[FM]{fm} M.~Fulger, T.~Murayama.
\textit{Seshadri constants for vector bundles}.
J. Pure Appl. Algebra \textbf{225} (2021), no. 4, 106559, 35 pp. 

\bibitem[La]{laz} R.~Lazarsfeld. \textit{Positivity in algebraic geometry, I}.
Ergebnisse der Mathematik und ihrer Grenzgebiete, 3. Folge  \textbf{48}, Springer-Verlag, Berlin, 2004.

\bibitem[Lo]{lo} A.~F.~Lopez. 
\textit{On the positivity of the first Chern class of an Ulrich vector bundle}.
Preprint 2020, arXiv:2008.07313. 
To appear on Commun. Contemp. Math. 



\bibitem[M]{mu} S.~Mukai.
\textit{Vector bundles and Brill-Noether theory}. 
Current topics in complex algebraic geometry (Berkeley, CA, 1992/93), 145-158, Math. Sci. Res. Inst. Publ., \textbf{28}, Cambridge Univ. Press, Cambridge, 1995.


\bibitem[S1]{sa1} E.~Sato.
\textit{Varieties which have two projective space bundle structures}. 
J. Math. Kyoto Univ. \textbf{25} (1985), no. 3, 445-457.

\bibitem[S2]{sa2} E.~Sato.
\textit{Projective manifolds swept out by large-dimensional linear spaces}. 
Tohoku Math. J. (2) \textbf{49} (1997), no. 3, 299-321.
 
\end{thebibliography}
\end{document}